\documentclass[11pt]{amsart}

\usepackage{amssymb}
\usepackage{graphicx}
\usepackage{amsthm}
\usepackage{amscd}
\usepackage{mathtools}
\usepackage{amssymb}
\usepackage{amsmath}

\usepackage{graphicx}
\usepackage{pict2e}

\usepackage{array}
\usepackage{verbatim}
\usepackage{color}
\usepackage[utf8]{inputenc}

\newtheorem{corollary}{Corollary}
\newtheorem*{corollary*}{Korollar}
\newtheorem*{notation*}{Notation}

\newtheorem*{dank*}{Danksagung}
\newtheorem{lemma}{Lemma}
\newtheorem*{example*}{Example}
\newtheorem{theorem}{Theorem}
\newtheorem{proposition}{Proposition}
\theoremstyle{remark}
\newtheorem{remark}{Remark}
\theoremstyle{theorema}
\newtheorem*{theorema}{Theorem}
\newtheorem*{theoremb}{Acknowledgement}
\newtheorem{theoremc}{Problem}

\theoremstyle{definition}
\newtheorem{definition}{Definition}
\newtheorem*{notatio}{Notation}

\newcommand{\mathell}{\mathfrak{l}}

\title[Tits arrangements on cubic curves]
{Tits arrangements on cubic curves}

\author{Michael~Cuntz}
\address{Michael Cuntz, Leibniz Universit\"at Hannover,
Institut f\"ur Algebra, Zah\-lentheorie und Diskrete Mathematik,
Fakult\"at f\"ur Mathematik und Physik,
Wel\-fengarten 1,
D-30167 Hannover, Germany}
\email{cuntz@math.uni-hannover.de}

\author{David~Geis}
\address{David Geis, Leibniz Universit\"at Hannover,
Institut f\"ur Algebra, Zah\-len\-theorie und Diskrete Mathematik,
Fakult\"at f\"ur Mathematik und Physik,
Wel\-fengarten 1,
D-30167 Hannover, Germany}
\email{geis@math.uni-hannover.de}

\begin{document}

\begin{abstract}
We classify affine rank three Tits arrangements whose roots are contained in the locus of a homogeneous cubic polynomial.
We find that there exist irreducible affine Tits arrangements which are not locally spherical.
\end{abstract}
% MSC classes: 20F55, 17B22, 52C35
% keywords: simplicial arrangement, affine, cubic

\maketitle

\section{Introduction}
Weyl groups are invariants of different types of algebras in Lie theory, \emph{Weyl groupoids} are invariants of (in a certain sense) more general quantum groups, the so-called Nichols algebras (see for example \cite{p-H-06}).
Geometrically, Weyl groupoids may be viewed as simplicial arrangements of hyperplanes which are not necessarily coming from a reflection group (see \cite{p-HW-10}, \cite{p-C10}). To go beyond the theory of finite dimensional Nichols algebras, it turns out that one needs an appropriate notion of infinite simplicial arrangement, which is the main contribution of \cite{CMW} where these are called \emph{Tits arrangements}.

In this paper we give a classification of \emph{affine rank three Tits arrangements} whose corresponding projective root vectors are contained in the locus of a homogeneous cubic polynomial. Our strategy for the classification builds upon the results obtained in \cite{CMW} and on elementary tools from the geometry of the projective plane, like Bézout's theorem and the fact that the conic in $\mathbb{P}^2(\mathbb{R})$ is a selfdual curve.

We find that there are only two classes of irreducible affine Tits arrangements satisfying the above property: namely the arrangement of type $\tilde{A}_2$ whose corresponding projective root vectors are contained in the union of three projective lines, and a class of arrangements which we call $\tilde{A}^0_2$ (see Figure \ref{fig_A2tilde}) and which is new to the literature to our knowledge. The projective root vectors of $\tilde{A}^0_2$ are contained in the union of a projective conic $\sigma$ and a projective line $\mathfrak{l}$ touching $\sigma$. It turns out that the arrangement $\tilde{A}^0_2$ is an example of an irreducible affine Tits arrangement which is not locally spherical. More precisely, we have the following main theorem (precise definitions are given in Section 2): \begin{theorema}
Let the pair $(\mathcal{A},T)$ be an affine rank three Tits arrangement and assume that the projective root vectors of $\mathcal{A}$ are contained in the locus of a homogeneous cubic polynomial. Then $\mathcal{A}$ is either a near pencil, an arrangement of type $\tilde{A}_2$, or it is an arrangement of type $\tilde{A}^0_2$.
\end{theorema}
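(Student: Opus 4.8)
Write $C=\{f=0\}$ for the cubic carrying all projective root vectors. Since $(\mathcal A,T)$ is affine, $\mathcal A$ is infinite, and hence $C$ contains infinitely many root vectors. The backbone of the argument is a Bézout incidence bound combined with projective duality: the hyperplanes of $\mathcal A$ through a fixed vertex $v$ correspond to the root vectors lying on the polar line $v^\vee$ of $v$, and $|C\cap v^\vee|\le 3$ unless $v^\vee$ is a line contained in $C$. Thus any vertex meeting more than three hyperplanes is dual to a line-component of $C$, of which there are at most three. I would organize the whole proof around the number of line-components of $C$, splitting into three cases: $C$ irreducible; $C=\sigma\cup\mathell$ with $\sigma$ an irreducible conic; and $C$ a union of two or three lines.

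When $C$ is a union of lines, each line-component $m$ dualizes to a point, and the roots on $m$ dualize to the full pencil of hyperplanes through that point, so $\mathcal A$ is a union of at most three pencils; invoking the structural results of \cite{CMW} together with simpliciality, I would conclude that $\mathcal A$ is then a near pencil or of type $\tilde A_2$. To exclude an irreducible $C$, I would use that the reflections attached to the roots (cf.\ \cite{CMW}) act as projective involutions permuting the infinitely many root vectors; since such a reflection maps infinitely many points of $C$ into $C$, Bézout forces it to preserve $C$. For smooth $C$ the subgroup of $\PGL_3$ preserving $C$ is finite, which the infinitely many distinct reflections cannot populate; for nodal or cuspidal $C$ the unique singular point is fixed by every reflection, hence lies on all but at most one hyperplane, collapsing $\mathcal A$ to a near pencil whose roots are collinear and therefore cannot span an irreducible cubic. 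Either way the irreducible case is ruled out.

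This leaves $C=\sigma\cup\mathell$, where the decisive tool is the self-duality of the conic. The roots on $\sigma$ dualize to the tangent lines of the dual conic $\sigma^\vee$, while the roots on $\mathell$ dualize to a single pencil through the point $p=\mathell^\vee$; hence $\mathcal A$ consists of a pencil through $p$ together with an infinite family of tangents to $\sigma^\vee$. The local geometry is then controlled by the mutual position of $\mathell$ and $\sigma$ over $\mathbb R$, namely secant, tangent, or disjoint. I would analyze the induced arrangement in a neighbourhood of each point of $\mathell\cap\sigma$ and show that simpliciality together with the Tits arrangement axioms forces the tangent configuration and pins the arrangement down to $\tilde A^0_2$, while the secant and disjoint positions either violate simpliciality or degenerate to a near pencil.

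The step I expect to be the main obstacle is exactly this conic-plus-line analysis: one must translate the Tits axioms into sharp local conditions on the envelope of tangents to $\sigma^\vee$ near its meeting with the extra pencil through $p$, exclude the secant and disjoint real positions, and verify that the surviving tangent configuration genuinely assembles into an affine Tits arrangement, which is precisely the new, non-locally-spherical example $\tilde A^0_2$. Collecting the three cases then yields the asserted trichotomy.
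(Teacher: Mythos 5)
Your overall case split (union of lines; irreducible conic plus line; irreducible cubic) is exactly the paper's decomposition, and your conic-plus-line outline (self-duality of the conic, analysis of secant/tangent/disjoint positions, tangency forced, yielding $\tilde A^0_2$) matches the paper's Propositions 3 and 4 in structure. However, your argument for excluding an irreducible cubic has a genuine gap: you invoke ``the reflections attached to the roots'' and require them to permute the root vectors, so that infinitely many involutions preserve $C$. A Tits arrangement in the sense of this paper (and of \cite{CMW}) carries no reflections at all: it is only a thin simplicial arrangement on a convex cone, and the whole point of the notion is to capture arrangements that do \emph{not} come from reflection groups. Nothing in the hypotheses makes $\mathcal A^*$ invariant under any projective involution, so the step ``Bézout forces the reflection to preserve $C$, but $\mathrm{Aut}(C)\subset\PGL_3$ is finite'' has no valid starting point, and the nodal/cuspidal subcase inherits the same unfounded premise. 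The irreducible case therefore remains open in your proof.

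The paper's actual route here is purely incidence-theoretic: Bézout bounds every vertex weight by $3$; the lemmas on weight-two vertices (a segment bounded by two weight-two vertices forces a near pencil, and a weight-two vertex surrounded by four weight-three vertices forces sphericity) combine with affineness to show every vertex has weight exactly $3$; and then a dedicated combinatorial lemma (Lemma 4) shows that an affine Tits arrangement all of whose vertices have weight three and whose dual lies on a cubic must contain infinitely many mutually parallel lines, which dualize to infinitely many collinear points of $\mathcal A^*$ and hence force a linear factor of the cubic. You would need an argument of this kind (or some other substitute not using reflections) to close the gap. Separately, be aware that your other two cases are still sketches: in the union-of-lines case the appeal to unspecified ``structural results of \cite{CMW}'' replaces a genuine case analysis (concurrent versus non-concurrent dual points, and which pencils are infinite), and in the conic-plus-line case you correctly flag, but do not carry out, the work that in the paper occupies Lemma 5 (the recursion formula for consecutive dual points) and the four-case coordinate computation of Proposition 4.
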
  
This result is established by proving Theorem 2 in Section 3. The necessary definitions and notations are collected in Section 2. In Section 4 we discuss some related open questions.   
\begin{theoremb}
We wish to thank Bernhard Mühlherr for many helpful discussions.
The second author was supported by a grant of the Deutsche Forschungsgemeinschaft (DFG).
\end{theoremb}

\begin{figure}
\begin{center}
\setlength{\unitlength}{0.8pt}
\begin{picture}(400,400)(100,200)
\Line(102.508119059996927894299376569,431.574625299117189753835264053)(479.133229792436283530961038267,311.054589864736595950103532310)
\Line(102.707793961343826392680997889,432.799168227265161327298901980)(478.630364220274469457622859553,310.048941203940869714256661436)
\Line(102.923188609133078626633322705,434.069493861938513804466623654)(478.103145429856645381373316028,309.009508255030658219553292546)
\Line(103.155681927391490264041209771,435.388055082608465313528950591)(477.549934296165335231592316026,307.934692574089709579894531440)
\Line(103.406794405876541414686380240,436.757468822359148796641888820)(476.968952998137563236652695235,306.822804964181402075957953169)
\Line(103.678204838759305264038958802,438.180528344535120363908323648)(476.358271133873460022772059593,305.672060328494532005247665589)
\Line(103.971769287387754255618203253,439.660216384720659677968081555)(475.715790267735064633731969600,304.480572391867092267744893792)
\Line(104.289542590099188212288643930,441.199719178744415717299011000)(475.039226719203947365713838174,303.246348340007761148582659654)
\Line(104.633802791531732579550760558,442.802441382494594268753631962)(474.326092380417268300615017048,301.967283443871533041681534252)
\Line(105.007078920915471793117538857,444.472021868202851833916415245)(473.573673324097880382501726345,300.641155759643863116107951836)
\Line(105.412182614397689472154553913,446.212350352557130891149242990)(472.779005936165211686473766470,299.265621023850122005421557968)
\Line(105.852244151607916073451173779,448.027584772015523162697762492)(471.938850277822937014142415734,297.838207899722181238311611947)
\Line(106.330753562420518996066012397,449.922169266770724997456954073)(471.049660350728014723283078486,296.356313776957042585997136772)
\Line(106.851607557154303715121846788,451.900852562790021289688103783)(470.107550906603473952036984248,294.817201384649839565616692989)
\Line(107.419163142961051155706930565,453.968706445889072833030206527)(469.108260410428298132548704982,293.217996549236963065544973453)
\Line(108.038298911166102420347837365,456.131143896068845573566338903)(468.047109735771204454474844811,291.555687519106513215108278357)
\Line(108.714485114360032827643457652,458.393936285676191231708260955)(466.918956144399197248739005688,289.827126389187172680604473643)
\Line(109.453863796427602425991477880,460.763228830351308534075802553)(465.718142083588690699727246410,288.029033297182296037719066296)
\Line(110.263340389897240080066387334,463.245553203367586639977870889)(464.438438328755725263942464310,286.158004233938344048039832400)
\Line(111.150688346628303412052290395,465.847835864572566209274398545)(463.072981013379648129912357131,284.210523520442839903282106036)
\Line(112.124668509344871870062686003,468.577400193332275015977632021)(461.614202132855438175621243039,282.182982260793306319679734935)
\Line(113.195165046150413552054197611,471.441959924578047478877714150)(460.053753197821728470781051165,280.071704392621459937511174257)
\Line(114.373339832352282733246561532,474.449600636972045118155886538)(458.382421864658603021839894085,277.872982332797004953245410793)
\Line(115.671807135579911989538561208,477.608745095750332908432657914)(456.590041611775623616075205062,275.583124665412133426040572667)
\Line(117.104830285648843891944832053,480.928097068686148900352706369)(454.665394893963907135704530297,273.198518848184571519577507450)
\Line(118.688541610126458321598742365,484.416556767835273354187869442)(452.596110737947282136981111942,270.715712526029946112343152913)
\Line(120.441186177731945682113147049,488.083099279828242891935633190)(450.368558497636860484050912541,268.131517733224966357310456195)
\Line(122.383388659204087107267942124,491.936605200609015648868148296)(447.967740539726776735392113307,265.443143022854101124955681386)
\Line(124.538440678746942829015524366,495.985630177409339167999774108)(445.377188067759286228724731643,262.648359349036725786393077906)
\Line(126.932603110842534968147222469,500.238097218626656228089371247)(442.578866220102215231232168322,259.745706278238328408596079167)
\Line(129.595413521875778594397367418,504.700892580815583804493170621)(439.553097110627837678784158977,256.734745709813936536983737373)
\Line(132.559982928911570982598493032,509.379343037127515326243912529)(436.278512737500509505969946595,253.616370566736830253931109528)
\Line(135.863257750767538681955534051,514.276549842078394518272812066)(432.732053762043038986469247301,250.393175609833505358705066955)
\Line(139.546211747870398464437277103,519.392553517967596360540686105)(428.889035093751650038179271738,247.069896251255830173489397038)
\Line(143.653918473606143194739788983,524.723304924710063485271275573)(424.723304924710063485271275573,243.653918473606143194739788983)
\Line(148.235437181601780264601486476,530.259423737940974397769478981)(420.207530035013145320440740107,240.155858027635518338207608441)
\Line(153.343424790982632975966281889,535.984737922907483278906742397)(415.313646187879447981212826343,236.590199183596837558624977306)
\Line(159.033365118135361367474024865,541.874620175995761920043033952)(410.013517078804737406372099500,232.975971609018068333706942095)
\Line(165.362287729641538397199668829,547.894173092127488817078172784)(404.279846732742020015610953145,229.337427754660179992529793696)
\Line(172.386838727387210577398105474,553.996367067571173112886547503)(398.087385771836409452744851353,225.704662275645065657836735315)
\Line(180.160574128663656537038434569,560.120304792454356220749569245)(391.414458139485742725892859414,222.114090375139018318582489494)
\Line(188.730386073329894086941470335,566.189870379063061636559561739)(384.244807810097605983809825191,218.608676180364907153238041994)
\Line(198.132057266544635946015405264,572.113108865226934787980254029)(376.569721452108329397993446381,215.237780494099547884024171794)
\Line(208.385081486462331185530199966,577.782751429259449684513464634)(368.390321927383014048313991708,212.056487564297888855293369224)
\Line(219.487088886944545587824413948,583.078319699797418445812627360)(359.719852525646932777114368225,209.124283329924385941039415955)
\Line(231.408454883362794177792604018,587.870167771555265311074731698)(350.585693670838604643566009252,206.503003651632671820599834949)
\Line(244.087901800891660685601089265,592.025616194749563937606707496)(341.030791563021848340286470526,204.254057146228813318865182452)
\Line(257.430044294428086414769689859,595.416987161366412467906051975)(331.114157389490237514592808999,202.435050654368273268849416560)
\Line(271.305790064385691470665047327,597.930902883230674914690685941)(320.910141849045990173645368564,201.096088605948285290848116045)
\Line(285.556216402742437441011621557,599.477760954437207623825119637)(310.506317249966195297355514885,200.276147398857081922322826391)
\Line(300.000000000000000000000000000,600.000000000000000000000000000)(300.000000000000000000000000000,200.000000000000000000000000000)
\Line(314.443783597257562558988378443,599.477760954437207623825119637)(289.493682750033804702644485115,200.276147398857081922322826391)
\Line(328.694209935614308529334952673,597.930902883230674914690685941)(279.089858150954009826354631436,201.096088605948285290848116045)
\Line(342.569955705571913585230310141,595.416987161366412467906051975)(268.885842610509762485407191001,202.435050654368273268849416560)
\Line(355.912098199108339314398910735,592.025616194749563937606707496)(258.969208436978151659713529474,204.254057146228813318865182452)
\Line(368.591545116637205822207395982,587.870167771555265311074731698)(249.414306329161395356433990748,206.503003651632671820599834949)
\Line(380.512911113055454412175586051,583.078319699797418445812627360)(240.280147474353067222885631775,209.124283329924385941039415955)
\Line(391.614918513537668814469800034,577.782751429259449684513464634)(231.609678072616985951686008293,212.056487564297888855293369224)
\Line(401.867942733455364053984594737,572.113108865226934787980254029)(223.430278547891670602006553619,215.237780494099547884024171794)
\Line(411.269613926670105913058529666,566.189870379063061636559561739)(215.755192189902394016190174809,218.608676180364907153238041994)
\Line(419.839425871336343462961565431,560.120304792454356220749569245)(208.585541860514257274107140586,222.114090375139018318582489494)
\Line(427.613161272612789422601894526,553.996367067571173112886547503)(201.912614228163590547255148647,225.704662275645065657836735315)
\Line(434.637712270358461602800331171,547.894173092127488817078172784)(195.720153267257979984389046855,229.337427754660179992529793696)
\Line(440.966634881864638632525975135,541.874620175995761920043033952)(189.986482921195262593627900500,232.975971609018068333706942095)
\Line(446.656575209017367024033718111,535.984737922907483278906742397)(184.686353812120552018787173657,236.590199183596837558624977306)
\Line(451.764562818398219735398513523,530.259423737940974397769478981)(179.792469964986854679559259893,240.155858027635518338207608441)
\Line(456.346081526393856805260211017,524.723304924710063485271275573)(175.276695075289936514728724427,243.653918473606143194739788983)
\Line(460.453788252129601535562722897,519.392553517967596360540686105)(171.110964906248349961820728262,247.069896251255830173489397038)
\Line(464.136742249232461318044465949,514.276549842078394518272812066)(167.267946237956961013530752699,250.393175609833505358705066955)
\Line(467.440017071088429017401506968,509.379343037127515326243912529)(163.721487262499490494030053405,253.616370566736830253931109528)
\Line(470.404586478124221405602632582,504.700892580815583804493170621)(160.446902889372162321215841022,256.734745709813936536983737373)
\Line(473.067396889157465031852777531,500.238097218626656228089371247)(157.421133779897784768767831678,259.745706278238328408596079167)
\Line(475.461559321253057170984475634,495.985630177409339167999774108)(154.622811932240713771275268356,262.648359349036725786393077906)
\Line(477.616611340795912892732057876,491.936605200609015648868148296)(152.032259460273223264607886693,265.443143022854101124955681386)
\Line(479.558813822268054317886852952,488.083099279828242891935633190)(149.631441502363139515949087458,268.131517733224966357310456195)
\Line(481.311458389873541678401257635,484.416556767835273354187869442)(147.403889262052717863018888058,270.715712526029946112343152913)
\Line(482.895169714351156108055167947,480.928097068686148900352706369)(145.334605106036092864295469703,273.198518848184571519577507450)
\Line(484.328192864420088010461438792,477.608745095750332908432657914)(143.409958388224376383924794938,275.583124665412133426040572667)
\Line(485.626660167647717266753438468,474.449600636972045118155886538)(141.617578135341396978160105915,277.872982332797004953245410793)
\Line(486.804834953849586447945802389,471.441959924578047478877714150)(139.946246802178271529218948834,280.071704392621459937511174257)
\Line(487.875331490655128129937313997,468.577400193332275015977632021)(138.385797867144561824378756961,282.182982260793306319679734935)
\Line(488.849311653371696587947709605,465.847835864572566209274398545)(136.927018986620351870087642869,284.210523520442839903282106036)
\Line(489.736659610102759919933612666,463.245553203367586639977870889)(135.561561671244274736057535690,286.158004233938344048039832400)
\Line(490.546136203572397574008522120,460.763228830351308534075802553)(134.281857916411309300272753590,288.029033297182296037719066296)
\Line(491.285514885639967172356542348,458.393936285676191231708260955)(133.081043855600802751260994312,289.827126389187172680604473643)
\Line(491.961701088833897579652162635,456.131143896068845573566338903)(131.952890264228795545525155190,291.555687519106513215108278357)
\Line(492.580836857038948844293069435,453.968706445889072833030206527)(130.891739589571701867451295018,293.217996549236963065544973453)
\Line(493.148392442845696284878153212,451.900852562790021289688103783)(129.892449093396526047963015752,294.817201384649839565616692989)
\Line(493.669246437579481003933987603,449.922169266770724997456954073)(128.950339649271985276716921515,296.356313776957042585997136772)
\Line(494.147755848392083926548826220,448.027584772015523162697762492)(128.061149722177062985857584266,297.838207899722181238311611947)
\Line(494.587817385602310527845446087,446.212350352557130891149242990)(127.220994063834788313526233530,299.265621023850122005421557968)
\Line(494.992921079084528206882461143,444.472021868202851833916415245)(126.426326675902119617498273655,300.641155759643863116107951836)
\Line(495.366197208468267420449239442,442.802441382494594268753631962)(125.673907619582731699384982952,301.967283443871533041681534252)
\Line(495.710457409900811787711356070,441.199719178744415717299011000)(124.960773280796052634286161826,303.246348340007761148582659654)
\Line(496.028230712612245744381796747,439.660216384720659677968081555)(124.284209732264935366268030400,304.480572391867092267744893792)
\Line(496.321795161240694735961041198,438.180528344535120363908323648)(123.641728866126539977227940407,305.672060328494532005247665589)
\Line(496.593205594123458585313619760,436.757468822359148796641888820)(123.031047001862436763347304765,306.822804964181402075957953169)
\Line(496.844318072608509735958790229,435.388055082608465313528950591)(122.450065703834664768407683974,307.934692574089709579894531440)
\Line(497.076811390866921373366677296,434.069493861938513804466623654)(121.896854570143354618626683971,309.009508255030658219553292546)
\Line(497.292206038656173607319002111,432.799168227265161327298901980)(121.369635779725530542377140447,310.048941203940869714256661436)
\Line(497.491880940003072105700623431,431.574625299117189753835264053)(120.866770207563716469038961733,311.054589864736595950103532310)
\Line(102.953870994032651567224415107,434.246504110108424055419412257)(476.503114469928398995366808567,305.943365027153176280458160836)
\Line(103.196008458029964381407702594,435.611640135609967184552318134)(475.847095041850570347932876648,304.722514908597778633955900158)
\Line(103.458492876040393070365945430,437.032903983383953337848994451)(475.154254105566762496136672933,303.454739791522914325462639255)
\Line(103.743296014546234738379731027,438.513713671444047989877191242)(474.421656252700783299470727910,302.137413532725971871335792286)
\Line(104.052619451026039636842781344,440.057758998670672713578568944)(473.646063766075701157450208305,300.767724310343605416732943365)
\Line(104.388926337811351338246946551,441.669027595155209899734544082)(472.823898484013609439450992084,299.342659915993416764625962764)
\Line(104.754978274557832307434319313,443.351833772420930232160565549)(471.951197976386475013164600239,297.858992003771845386493850465)
\Line(105.153878228748875798853882277,445.110850476385153940391268827)(471.023565055255699494161311316,296.313259305778535004172004444)
\Line(105.589120638079591068179714438,446.951144668951674154819419009)(470.036109453779971916720478686,294.701749863437154740285418279)
\Line(106.064650067628734702175405615,448.878216483508322303468712980)(468.983380277485279660803525852,293.020482380995945115454984811)
\Line(106.584930089609509915410269027,450.898042512053047106226160991)(467.859287552649650777123559613,291.265186888849263934771451197)
\Line(107.155024418406832246016751861,453.017123582242132710295076209)(466.657010855876428181503273453,289.431285018843208195454168613)
\Line(107.780692786637294030251308129,455.242537364017666853966001674)(465.368892592537449550066474666,287.513870354084043417961493591)
\Line(108.468504612153082764035500766,457.581996096828521379390092934)(463.986312981839520441578135017,285.507689539331196315065222424)
\Line(109.225974210667854278346221510,460.043909633960656797309843748)(462.499543180767812870840142093,283.407125148910677501062883457)
\Line(110.061722191586187572369380539,462.637453836932237450657865083)(460.897572210961886735906155507,281.206181740722277452276457460)
\Line(110.985668783658972672006544477,465.372644086340349406971384107)(459.167902408526390971724461776,278.898477124068259271557604131)
\Line(112.009266233469567429429376787,468.260413256304525840195213914)(457.296306956779650555006101678,276.477241701140134764846887435)
\Line(113.145779181289019223618982530,471.312692855005804309463353797)(455.266541638338712199255554762,273.935329899015596823519177510)
\Line(114.410624143877663608145427529,474.542495057081068957969587052)(453.060001200553589440105169322,271.265249320602893555603066753)
\Line(115.821782045509826574276730331,477.963991887975525324934864940)(450.655308606010128399908375762,268.459215492578688428450259977)
\Line(117.400301288736946621606645147,481.592585634700644674727169790)(448.027822875098165245118006102,265.509243236352616769497354460)
\Line(119.170913327921638646221680272,485.444961310436392170493296908)(445.149048166782307758771935963,262.407290104900144882958911435)
\Line(121.162788312303247760916917641,489.539107186580435316769860259)(441.985923162420316267174717592,259.145473541972127592347741803)
\Line(123.410465326973012163383764443,493.894282275140690507377306124)(438.499965760176214805540196052,255.716392183900474336928349948)
\Line(125.955000284347228794855236311,498.530899082361096611756594492)(434.646243740034983510570956131,252.113594111226403042595238974)
\Line(128.845384709540306314757306605,503.470274305110164930604440369)(430.372137880012318851877652512,248.332252391633846629964147678)
\Line(132.140300359596646939635896635,508.734176948343019235838825933)(425.615860957565991431052269721,244.370133085292885860192977790)
\Line(135.910287069971932922585264621,514.344068978416977522824713906)(420.304696038540235255020543778,240.228975996664396961441312309)
\Line(140.240412477515593941519077690,520.319882790192431583832281775)(414.352923953192637015029163256,235.916457914404646278893489925)
\Line(145.233538013026701661193270513,526.678104832819270651174167506)(407.659429256189164841443901515,231.448977184261499573392207630)
\Line(151.014266998356265536691678038,533.428825079002044820559842097)(400.105017556992578228341341886,226.855593622218890738529486592)
\Line(157.733620766721146918706243343,540.571253604187806913269670765)(391.549564483679088124635847821,222.183585564074865696684677433)
\Line(165.574385532572282478702469486,548.086981787915739547226501412)(381.829280905799698625329356204,217.506250007185931424994317894)
\Line(174.756848528300077291344440922,555.929961872107278100169357498)(370.754674865721694218828470313,212.933765781619448477072306065)
\Line(185.544196306974701327274645541,564.011795310531420333162450601)(358.110303081731837776149539511,208.628129873407812596865639256)
\Line(198.246016103727382507772924763,572.180506333432220016182310723)(343.658256225787119520495057251,204.823268130333937036673765490)
\Line(213.216876043407241756302640532,580.190702857652053961749772660)(327.148660649016086939771503170,201.851191714498152975136191069)
\Line(230.844557729900421006921187478,587.663328342078928365449385387)(308.342410343784591839253936668,200.174065272658074203785639442)
\Line(251.518980419472741679799800317,594.035024519885410768491450513)(287.053719305895568335119563925,200.419455316432561663410992092)
\Line(275.568627708308487723411343748,598.502161318069200068819998714)(263.222017386578686962580804621,203.410631022715575722242746649)
\Line(303.148628685367571766672045666,599.975213807615868934003319895)(237.021715095592405164684029669,210.174460015256992122284488755)
\Line(334.069493861938513804466623654,597.076811390866921373366677296)(209.009508255030658219553292546,221.896854570143354618626683971)
\Line(367.578211803859649825457750411,588.237045475625451586876268504)(180.285800587593909496660139995,239.786047863968260277485365403)
\Line(402.152671498622150678660192439,571.944269185380498764488680959)(152.608156327674916760037989326,264.812558205753133941569046358)
\Line(435.433514561181186053002716064,547.165767533099638468192659605)(128.357449620399894854238993810,297.341172327040282916656072009)
\Line(464.438438328755725263942464310,513.841995766061655951960167599)(110.263340389897240080066387334,336.754446796632413360022129111)
\Line(486.109246349690127260771268720,473.234885288026170577487542711)(100.881763149469669459857097563,381.240262434242180654881173480)
\Line(498.044069053322654239816515654,427.902449942662170576460825392)(101.955930946677345760183484346,427.902449942662170576460825392)
\Line(499.118236850530330540142902438,381.240262434242180654881173480)(113.890753650309872739228731279,473.234885288026170577487542711)
\Line(489.736659610102759919933612666,336.754446796632413360022129111)(135.561561671244274736057535689,513.841995766061655951960167600)
\Line(471.642550379600105145761006190,297.341172327040282916656072009)(164.566485438818813946997283936,547.165767533099638468192659606)
\Line(447.391843672325083239962010675,264.812558205753133941569046358)(197.847328501377849321339807561,571.944269185380498764488680959)
\Line(419.714199412406090503339860005,239.786047863968260277485365403)(232.421788196140350174542249589,588.237045475625451586876268504)
\Line(390.990491744969341780446707454,221.896854570143354618626683971)(265.930506138061486195533376346,597.076811390866921373366677296)
\Line(362.978284904407594835315970331,210.174460015256992122284488755)(296.851371314632428233327954334,599.975213807615868934003319895)
\Line(336.777982613421313037419195379,203.410631022715575722242746649)(324.431372291691512276588656252,598.502161318069200068819998714)
\Line(312.946280694104431664880436075,200.419455316432561663410992092)(348.481019580527258320200199683,594.035024519885410768491450513)
\Line(291.657589656215408160746063333,200.174065272658074203785639442)(369.155442270099578993078812522,587.663328342078928365449385387)
\Line(272.851339350983913060228496830,201.851191714498152975136191069)(386.783123956592758243697359469,580.190702857652053961749772660)
\Line(256.341743774212880479504942750,204.823268130333937036673765490)(401.753983896272617492227075237,572.180506333432220016182310723)
\Line(241.889696918268162223850460489,208.628129873407812596865639256)(414.455803693025298672725354459,564.011795310531420333162450601)
\Line(229.245325134278305781171529687,212.933765781619448477072306064)(425.243151471699922708655559078,555.929961872107278100169357498)
\Line(218.170719094200301374670643796,217.506250007185931424994317894)(434.425614467427717521297530514,548.086981787915739547226501412)
\Line(208.450435516320911875364152179,222.183585564074865696684677433)(442.266379233278853081293756657,540.571253604187806913269670765)
\Line(199.894982443007421771658658113,226.855593622218890738529486592)(448.985733001643734463308321961,533.428825079002044820559842097)
\Line(192.340570743810835158556098485,231.448977184261499573392207630)(454.766461986973298338806729487,526.678104832819270651174167506)
\Line(185.647076046807362984970836744,235.916457914404646278893489925)(459.759587522484406058480922310,520.319882790192431583832281775)
\Line(179.695303961459764744979456222,240.228975996664396961441312309)(464.089712930028067077414735379,514.344068978416977522824713906)
\Line(174.384139042434008568947730278,244.370133085292885860192977791)(467.859699640403353060364103365,508.734176948343019235838825933)
\Line(169.627862119987681148122347488,248.332252391633846629964147678)(471.154615290459693685242693395,503.470274305110164930604440369)
\Line(165.353756259965016489429043869,252.113594111226403042595238974)(474.044999715652771205144763689,498.530899082361096611756594492)
\Line(161.500034239823785194459803948,255.716392183900474336928349948)(476.589534673026987836616235557,493.894282275140690507377306124)
\Line(158.014076837579683732825282408,259.145473541972127592347741803)(478.837211687696752239083082358,489.539107186580435316769860259)
\Line(154.850951833217692241228064037,262.407290104900144882958911435)(480.829086672078361353778319729,485.444961310436392170493296908)
\Line(151.972177124901834754881993898,265.509243236352616769497354460)(482.599698711263053378393354852,481.592585634700644674727169790)
\Line(149.344691393989871600091624238,268.459215492578688428450259977)(484.178217954490173425723269669,477.963991887975525324934864940)
\Line(146.939998799446410559894830678,271.265249320602893555603066752)(485.589375856122336391854572471,474.542495057081068957969587052)
\Line(144.733458361661287800744445238,273.935329899015596823519177510)(486.854220818710980776381017470,471.312692855005804309463353797)
\Line(142.703693043220349444993898322,276.477241701140134764846887435)(487.990733766530432570570623213,468.260413256304525840195213914)
\Line(140.832097591473609028275538224,278.898477124068259271557604131)(489.014331216341027327993455523,465.372644086340349406971384107)
\Line(139.102427789038113264093844493,281.206181740722277452276457460)(489.938277808413812427630619461,462.637453836932237450657865083)
\Line(137.500456819232187129159857907,283.407125148910677501062883457)(490.774025789332145721653778490,460.043909633960656797309843748)
\Line(136.013687018160479558421864983,285.507689539331196315065222424)(491.531495387846917235964499234,457.581996096828521379390092934)
\Line(134.631107407462550449933525334,287.513870354084043417961493591)(492.219307213362705969748691871,455.242537364017666853966001674)
\Line(133.342989144123571818496726546,289.431285018843208195454168613)(492.844975581593167753983248138,453.017123582242132710295076209)
\Line(132.140712447350349222876440386,291.265186888849263934771451197)(493.415069910390490084589730973,450.898042512053047106226160991)
\Line(131.016619722514720339196474148,293.020482380995945115454984811)(493.935349932371265297824594385,448.878216483508322303468712980)
\Line(129.963890546220028083279521315,294.701749863437154740285418279)(494.410879361920408931820285562,446.951144668951674154819419009)
\Line(128.976434944744300505838688684,296.313259305778535004172004444)(494.846121771251124201146117723,445.110850476385153940391268827)
\Line(128.048802023613524986835399761,297.858992003771845386493850465)(495.245021725442167692565680687,443.351833772420930232160565549)
\Line(127.176101515986390560549007915,299.342659915993416764625962764)(495.611073662188648661753053449,441.669027595155209899734544082)
\Line(126.353936233924298842549791694,300.767724310343605416732943365)(495.947380548973960363157218656,440.057758998670672713578568944)
\Line(125.578343747299216700529272090,302.137413532725971871335792286)(496.256703985453765261620268973,438.513713671444047989877191242)
\Line(124.845745894433237503863327067,303.454739791522914325462639255)(496.541507123959606929634054570,437.032903983383953337848994451)
\Line(124.152904958149429652067123353,304.722514908597778633955900158)(496.803991541970035618592297406,435.611640135609967184552318135)
\Line(123.496885530071601004633191433,305.943365027153176280458160836)(497.046129005967348432775584892,434.246504110108424055419412257)
\Line(122.875021242256737664319149538,307.119744293692870539362813163)(497.269688907458089462058395088,432.934326140892981902169314882)
\Line(122.284885674639522987029818917,308.253947549095850119429350492)(497.476262022085234594591137461,431.672163452860886832442942118)
\Line(121.724266854208738204932083002,309.348122074991613245453621205)(497.667281136140734546969914173,430.457281038298565087183929233)
\Line(121.191144851280514890144739172,310.404278448107505179116466397)(497.844038996971726244388633446,429.287134263439480841760973239)
\thinlines
\strokepath
\end{picture}
\end{center}
A subset of the arrangement of type $\tilde{A}^0_2$\caption{\label{fig_A2tilde}}
\end{figure}

\section{Definitions and notation}
We start with the notion of a Tits arrangement in $\mathbb{R}^r$ (see \cite{CMW}).

\begin{definition}
Let $\mathcal{A}$ be a (possibly infinite) set of linear hyperplanes in $V:=\mathbb{R}^r$ and let $T$ be an open convex cone in $V$. We say that $\mathcal{A}$ \textit{is locally finite in} $T$ if for every $x \in T$  there exists a neighbourhood $U_x \subset T$ of $x$, such that $\lbrace H \in \mathcal{A} \mid H \cap U_x \neq \emptyset \rbrace$ is finite. A \textit{hyperplane arrangement (of rank r)} is a pair $(\mathcal{A},T)$, where $T$ is a convex open cone in $V$, and $\mathcal{A}$ is a set of linear hyperplanes such that the following holds: \begin{itemize}
\item $H \cap T \neq \emptyset$ for all $H \in \mathcal{A}$,
\item $\mathcal{A}$ is locally finite in $T$.
\end{itemize}
Denote by $\overline{T}$ the topological closure of $T$ with respect to the standard topology of $V$. If $X \subset \overline{T}$ then the \textit{localization at} $X$ \textit{(in} $\mathcal{A}$\textit{)} is defined as \begin{align*}
\mathcal{A}_X:= \lbrace H \in \mathcal{A} \: | \: X \subset H \rbrace.
\end{align*} 
In the case $X= \lbrace x \rbrace$ we write $\mathcal{A}_x$ instead of $A_{ \lbrace x \rbrace }$ and call $(\mathcal{A}_x, T)$ the \textit{parabolic subarrangement at} $x$.  The connected components of $T \setminus \bigcup_{H \in \mathcal{A}} H$ are called \textit{chambers} or \textit{cells}. If $K$ is a chamber then its \textit{walls} are given by the hyperplanes contained in the set \begin{align*}
W^K:= \lbrace H \leq V \: | \: \text{dim}(H)= r-1, \langle H \cap \overline{K} \rangle_{\mathbb{R}}=H, H \cap K = \emptyset \rbrace.
\end{align*}
The arrangement $(\mathcal{A}, T)$ is called \textit{thin} if $W^K \subset \mathcal{A}$ for each chamber $K$. A \textit{simplicial hyperplane arrangement (of rank $r$)} is an arrangement $(\mathcal{A}, T)$ such that each chamber $K$ is an open simplicial cone. $T$ is called the \textit{Tits cone} of the arrangement. Finally, a simplicial arrangement is called a \textit{Tits arrangement} if it is also thin. 
\end{definition}

\begin{remark}
If the pair $(\mathcal{A},T)$ is a Tits arrangement, we usually omit the reference to $T$, since it should always be clear from the context.
\end{remark}

\begin{definition}
Let the pair $(\mathcal{A}, T)$ be a Tits arrangement and denote the set of chambers by $\mathcal{K}$. Then we have the following thin chamber complex \begin{align*}
\mathcal{S}(\mathcal{A}, T):= \left\lbrace \overline{K} \cap \bigcap_{H \in X} H \: | \: K \in \mathcal{K}, X \subset W^K \right\rbrace,
\end{align*} 
whose poset-structure is given by set-wise inclusion. The $0$-simplices of $\mathcal{S}(\mathcal{A}, T)$ are called \textit{vertices} and the $1$-simplices of $\mathcal{S}(\mathcal{A}, T)$ are called \textit{segments} or \textit{edges}. We call the Tits arrangement $(\mathcal{A}, T)$ \textit{locally spherical} if all vertices meet $T$. If $v$ is a vertex of $\mathcal{A}$ then we define its \textit{weight} to be $w(v):=|\mathcal{A}_{v}|$. If $\overline{K}$ is the closure of a chamber $K$ with supporting hyperplanes $H_1, H_2, H_3 \in \mathcal{A}$, then the \textit{Coxeter diagram} $\Gamma^K$ associated to $K$ is the weighted undirected graph with vertices $H_1, H_2, H_3$ and edges between $H_i, H_j$ if and only if $|\mathcal{A}_{H_i \cap H_j}|>2$; in this case the weight of the edge between $H_i, H_j$ is exactly the quantity $|\mathcal{A}_{H_i \cap H_j}|$. We agree that edge weights less than four are omitted. The Tits arrangement $\mathcal{A}$ is called \textit{irreducible} if $\Gamma^K$ is connected for any chamber $K$ of $\mathcal{A}$. A rank three Tits arrangement which is not irreducible is called a \textit{near pencil}. 
\end{definition}

\begin{definition}
Let the pair $(\mathcal{A}, T)$ be a Tits arrangement in $V:=\mathbb{R}^r$ and denote by $\partial T$ the boundary of $T$. If there is a linear form $0 \neq \alpha \in V^*$ such that $\partial T=\text{ker}(\alpha)$, then we say that $(\mathcal{A}, T)$ is an \textit{affine Tits arrangement}.
\end{definition}

In the following sections we will be concerned with the case of an affine Tits arrangement $(\mathcal{A},T)$ of rank three. Then we may view $\mathcal{A}$ as set of projective lines in $\mathbb{P}^2(\mathbb{R})$ and the boundary $\partial T$ of $T$ is again a projective line.
Further, in $\mathbb{P}^2(\mathbb{R})$ we have a duality between projective lines and projective points, for which we require the following notation.

\begin{notatio}
Let $(\mathcal{A},T)$ be a Tits arrangement of rank three. By abuse of notation we denote the set of projective lines $\lbrace  \mathfrak{g} \mid \exists H \in \mathcal{A}: \mathfrak{g}=\pi(H)  \rbrace$ by $\mathcal{A}$ as well; here $\pi: \{U\le \mathbb{R}^3 \mid \dim U \ge 1\} \longrightarrow \{ U\le \mathbb{P}^2(\mathbb{R}) \}$ is the natural projection. If $p \in  (\mathbb{P}^2(\mathbb{R}))^*$ then we denote the corresponding dual line by $p^* \subset \mathbb{P}^2(\mathbb{R})$. Likewise, if $\mathfrak{l} \subset (\mathbb{P}^2(\mathbb{R}))^*$ is a projective line, then its corresponding dual point is denoted by $\mathfrak{l}^* \in \mathbb{P}^2(\mathbb{R})$. Similarly, if $\mathcal{A}$ is a set of projective lines in $\mathbb{P}^2(\mathbb{R})$, we write $\mathcal{A}^* \subset (\mathbb{P}^2(\mathbb{R}))^*$ for the corresponding set of dual projective points (and vice versa).    
\end{notatio}

\section{Results and proofs}

Now we are ready to prove our main theorem.
The main strategy can be summarized as follows: according to the possible factorizations of a homogeneous cubic polynomial $P$, there are naturally three cases to consider. Namely, $P$ may factor as a product of three linear polynomials, or it may factor as a product of an irreducible quadratic polynomial and a linear polynomial, or $P$ may be irreducible. We examine all three cases and collect all (up to projectivity) affine Tits arrangements $\mathcal{A}$ such that $\mathcal{A}^* \subset V(P)$. 

We start with the following lemma which will be used extensively to rule out the possibility of existence of certain Tits arrangements. It basically says that near pencils are the only rank three Tits arrangements containing a segment bounded by two vertices of weight two.
\begin{lemma} \label{near pencil lemma}
Let $\mathcal{A}$ be an Tits arrangement of rank three. Suppose there is a line $\mathfrak{g} \in \mathcal{A}$ containing two vertices $v_1, v_2$ of weight two such that there is no other vertex contained in the bounded segment between $v_1$ and $v_2$ on $\mathfrak{g}$. Then $\mathcal{A}$ is a near pencil. 
\end{lemma}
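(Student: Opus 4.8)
The plan is to produce a single chamber whose Coxeter diagram is disconnected; by the definition given in Section~2 this immediately makes $\mathcal{A}$ a near pencil. The role of the hypothesis that the segment between $v_1$ and $v_2$ contains no further vertex is precisely to guarantee that one chamber simultaneously sees both $v_1$ and $v_2$ as vertices.

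First I would record the local data at the two weight-two vertices. Since $w(v_1)=|\mathcal{A}_{v_1}|=2$, there is exactly one line $\mathfrak{h}_1\in\mathcal{A}\setminus\{\mathfrak{g}\}$ through $v_1$, and likewise exactly one line $\mathfrak{h}_2\in\mathcal{A}\setminus\{\mathfrak{g}\}$ through $v_2$. These are distinct: a line through both $v_1$ and $v_2$ would have to be $\mathfrak{g}$ itself. Hence $\mathfrak{h}_1$ and $\mathfrak{h}_2$ meet in a unique point $p:=\mathfrak{h}_1\cap\mathfrak{h}_2$, and $p\notin\mathfrak{g}$, since otherwise $\mathfrak{g},\mathfrak{h}_1,\mathfrak{h}_2$ would be concurrent and could not bound a simplicial cone.

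Next I would locate the relevant chamber. By hypothesis the segment of $\mathfrak{g}$ between $v_1$ and $v_2$ carries no other vertex, so it is a single edge (a $1$-simplex) of $\mathcal{S}(\mathcal{A},T)$; let $K$ be a chamber having this edge as a face. As $\mathcal{A}$ has rank three, $\overline{K}$ is a triangle, and having $[v_1,v_2]$ as one of its three edges forces $v_1$ and $v_2$ to be two of its three vertices. The wall carrying this edge is $\mathfrak{g}$; the wall meeting $\mathfrak{g}$ at $v_1$ is a line of $\mathcal{A}$ through $v_1$ distinct from $\mathfrak{g}$, hence equals $\mathfrak{h}_1$, and similarly the wall at $v_2$ equals $\mathfrak{h}_2$. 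Thus $K$ has supporting lines $\mathfrak{g},\mathfrak{h}_1,\mathfrak{h}_2$ and third vertex $p$. Finally I would read off $\Gamma^K$ on the vertices $\mathfrak{g},\mathfrak{h}_1,\mathfrak{h}_2$: there is an edge between $\mathfrak{g}$ and $\mathfrak{h}_1$ only if $|\mathcal{A}_{v_1}|>2$, and between $\mathfrak{g}$ and $\mathfrak{h}_2$ only if $|\mathcal{A}_{v_2}|>2$, both of which fail since $w(v_1)=w(v_2)=2$. Therefore $\mathfrak{g}$ is an isolated vertex of $\Gamma^K$, regardless of the value of $|\mathcal{A}_p|$, so $\Gamma^K$ is disconnected and $\mathcal{A}$ is a near pencil.

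I do not expect any hard computation here; the single point requiring care is the middle step. One must genuinely use the "no vertex in between" hypothesis to ensure that \emph{one} chamber has both $v_1$ and $v_2$ as vertices, and one must check the non-degeneracy facts ($\mathfrak{h}_1\neq\mathfrak{h}_2$ and $p\notin\mathfrak{g}$) so that $\overline{K}$ is an honest triangle with the three asserted walls. Everything after that is a direct reading of the definitions of weight, of the Coxeter diagram, and of near pencil.
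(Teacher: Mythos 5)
Your proof is correct, but it finishes by a genuinely different mechanism than the paper's. Both arguments start the same way: using $w(v_1)=w(v_2)=2$ and the hypothesis that the open segment of $\mathfrak{g}$ between $v_1$ and $v_2$ contains no vertex, one produces a chamber $K$ (the paper notes there are two, one on each side of $\mathfrak{g}$) whose walls are $\mathfrak{g}$ together with the unique second lines $\mathfrak{h}_1,\mathfrak{h}_2$ through $v_1,v_2$, and whose third vertex is $p=\mathfrak{h}_1\cap\mathfrak{h}_2$. At that point you simply read off the Coxeter diagram: since $|\mathcal{A}_{v_1}|=|\mathcal{A}_{v_2}|=2$, the vertex $\mathfrak{g}$ of $\Gamma^K$ is isolated, so $\Gamma^K$ is disconnected and $\mathcal{A}$ fails to be irreducible, which is exactly the paper's definition of a near pencil; this is a complete argument, since irreducibility requires $\Gamma^K$ to be connected for \emph{every} chamber, so one bad chamber suffices. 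The paper instead proves the stronger geometric statement that every line of $\mathcal{A}\setminus\{\mathfrak{g}\}$ passes through $p$: a line avoiding $p$ would have to cross the interior of one of the two chambers with vertices $v_1,v_2,p$, which is impossible. Your route is more economical and stays strictly within the definitions; the paper's route recovers the classical concurrency picture of a near pencil (all lines but one through a common point), which is the structural information it implicitly relies on when this lemma is invoked later (e.g.\ in the proofs of Propositions 1 and 2, where the conclusion ``near pencil'' is treated as identifying the arrangement, not merely as reducibility). Your checks of the non-degeneracy facts ($\mathfrak{h}_1\neq\mathfrak{h}_2$, $p\notin\mathfrak{g}$) and of the existence of the chamber carrying the edge $[v_1,v_2]$ are exactly the points the paper passes over with ``it follows that'' and ``it is easy to see,'' so nothing in your write-up is superfluous.
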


\begin{proof}
Denote by $\mathfrak{g}_1, \mathfrak{g}_2$ the two lines meeting $\mathfrak{g}$ in $v_1$ respectively $v_2$ and set $v:=\mathfrak{g}_1 \cap \mathfrak{g}_2$. Using $w(v_1)=w(v_2)=2$ it follows that there are two chambers with vertices $v_1, v_2, v$ and it is easy to see that every line $\mathfrak{g}^\prime \in \mathcal{A} \setminus \lbrace \mathfrak{g} \rbrace$ needs to pass through $v$.  
\end{proof}
We state two further lemmas, which will turn out to be useful and may be interesting in their own right. 
\begin{lemma} \label{nur ein dicker vertex lemma}
Let $\mathcal{A}$ be an affine Tits arrangement of rank three. Then there is at most one vertex of $\mathcal{A}$ contained in $\partial T$.
\end{lemma}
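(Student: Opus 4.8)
The plan is to combine one elementary consequence of thinness with the convexity of the Tits cone. Work in the affine picture. Since $(\mathcal{A},T)$ is affine, $\partial T=\ker(\alpha)$ is a hyperplane, and because $T$ is an open convex cone whose topological boundary is a full hyperplane through $0$, $T$ is forced to be the open half-space $\{\alpha>0\}$: any $x\in T$ has $\alpha(x)\neq 0$ (else $x\in\partial T$, impossible for an open set and its boundary), so $\alpha>0$ on the connected set $T$, and a relative boundary point inside $\{\alpha>0\}$ would give a boundary point with $\alpha>0$, contradicting $\partial T=\ker(\alpha)$. A vertex $v\in\partial T$ is then a $0$-simplex of $\mathcal{S}(\mathcal{A},T)$, i.e.\ a ray lying in $\ker(\alpha)$; projectively it is a point of the projective line $\partial T$, and two distinct such vertices are two distinct points of this line, hence linearly independent rays in $\partial T$.

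The only input from thinness is the following. Let $\overline{K}=\operatorname{cone}(e_1,e_2,e_3)$ be the closure of a chamber; since $K\subset T$ we have $\alpha\ge 0$ on $\overline{K}$. If two generators, say $e_1,e_2$, lay in $\ker(\alpha)$, then the wall $\langle e_1,e_2\rangle_{\mathbb{R}}$ would equal $\ker(\alpha)=\partial T$, and by thinness this wall lies in $\mathcal{A}$, contradicting $H\cap T\neq\emptyset$ for all $H\in\mathcal{A}$ (as $\partial T\cap T=\emptyset$). Hence at most one generator of $\overline{K}$ lies in $\partial T$, so
\[
\overline{K}\cap\partial T \ \text{ is a single ray or } \ \{0\}.
\]
Writing $U:=\bigcup_{K\in\mathcal{K}}\overline{K}$ for the union of all closed chambers, it follows that $U\cap\partial T$ contributes at most one ray per chamber; as $\mathcal{A}$ is locally finite there are only countably many chambers, so $U\cap\partial T$ is a countable union of rays, meeting only countably many directions of $\partial T$.

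The engine is the convexity of $U$: the union of the closed chambers of a Tits arrangement is a convex cone. This is the convexity of the Tits cone, the basic structural fact underlying \cite{CMW} and generalizing the classical convexity of the Tits cone of a Coxeter group. Granting it, suppose for contradiction that $v_1\neq v_2$ are vertices in $\partial T$. Each vertex ray lies in the chamber closures it bounds, so $v_1,v_2\subset U$, and they are linearly independent; by convexity the full two-dimensional cone $\operatorname{cone}(v_1,v_2)\subset U$. But $\operatorname{cone}(v_1,v_2)\subset\partial T$ sweeps out an arc of uncountably many directions in $\partial T$, whereas $U\cap\partial T$ realizes only countably many directions. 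This contradiction shows $\mathcal{A}$ has at most one vertex in $\partial T$.

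The main obstacle is precisely the convexity of $U$ (equivalently, that $\operatorname{cone}(v_1,v_2)\subseteq U$); everything else is elementary. If one wished to avoid invoking it, the hands-on route is to pass to the cross-section $E=\{\alpha=1\}$, where the closed chambers tile the plane $\mathbb{R}^2$ by triangles and by \emph{half-strips} (triangles with a single ideal vertex, bounded by two parallel walls running to infinity), and to observe that a vertex of $\partial T$ is exactly the ideal apex of such a half-strip. One must then rule out two half-strips escaping to infinity in two different directions. The difficulty is that local finiteness together with local simpliciality at the finite vertices permits infinitely many lines to accumulate toward the intermediate directions, so no purely local argument closes the case; some global convexity or monotonicity input seems unavoidable, which is why I would route the proof through the convexity of $U$.
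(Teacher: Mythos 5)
Your preparatory steps are correct (that $T$ is the open half-space $\{\alpha>0\}$, that thinness forbids two generators of a chamber closure from lying in $\ker(\alpha)$ — so $\overline{K}\cap\partial T$ is at most one ray — and that there are only countably many chambers), but the proof has a genuine gap at exactly the point you flag yourself: the convexity of $U=\bigcup_{K}\overline{K}$ is neither proved nor legitimately citable, and in the affine setting it is essentially the statement to be proved. Indeed, since $T\subseteq U\subseteq\overline{T}$ and $T$ is an open half-space, a segment joining two points of $U$ can fail to lie in $U$ only inside the hyperplane $\partial T$; hence $U$ is convex if and only if $U\cap\partial T$ is convex. But $U\cap\partial T$ is precisely the union of the ideal rays, i.e.\ of the vertices lying in $\partial T$, so by your own countability observation its convexity is equivalent to there being at most one such vertex (as a projective point). ``Granting'' the convexity of $U$ is therefore granting the conclusion: the argument is circular. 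Nor does \cite{CMW} supply the missing input: there the \emph{open} cone $T$ is convex by hypothesis, which says nothing about the strictly larger set $U$, and the classical convexity of the Tits cone of a Coxeter group (the union of the \emph{closed} chambers) is proved via the group action and the exchange condition, neither of which exists for a general Tits arrangement. Tellingly, the paper itself — written by an author of \cite{CMW} — does not invoke any such convexity.

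Your closing claim that ``no purely local argument closes the case'' is also belied by the paper's own proof, which is short and direct: suppose $v\neq w$ are vertices in $\partial T$, and take a chamber $K$ having $v$ as a vertex. By thinness, $K$ is contained in the cone bounded by two neighbouring lines of $\mathcal{A}$ through $v$ (affinely, $K$ is a half-strip in the direction $v$). The lines of $\mathcal{A}$ through $w$ accumulate at $\partial T$, and from this one concludes that infinitely many of them meet $K$ — impossible, since a chamber is a connected component of the complement and meets no hyperplane of $\mathcal{A}$ (alternatively, this contradicts local finiteness in $T$). So the two ideal vertices are played off against each other directly, with no global convexity needed. If you wish to salvage your route, you would have to \emph{prove} the convexity of $U$ for affine Tits arrangements, and any such proof would have to contain an argument of the above kind — at which point the detour through $U$ buys nothing.
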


\begin{proof}
Suppose there were two vertices $v \neq w \in \partial T$. Then there is a chamber $K$ having $v$ as a vertex. As $\mathcal{A}$ is thin, it follows that $K$ has to be contained in the cone $C$ generated by two neighbouring lines passing through $v$. As the lines passing through $w$ accumulate at $\partial T$ we conclude that there are infinitely many lines passing through $w$ and intersecting $K$, a contradiction.
\end{proof}

\begin{lemma} \label{4 mal 3  lemma}
Let $\mathcal{A}$ be a Tits arrangement of rank three. Suppose there is a vertex $v$ of weight two which is surrounded by neighbouring vertices $v_1, v_2, v_3, v_4$ of weight three. Then $\mathcal{A}$ is spherical and $|\mathcal{A}| \in \lbrace 6,7 \rbrace $.
\end{lemma}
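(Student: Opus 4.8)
The plan is to reconstruct $\mathcal{A}$ explicitly from the local data around $v$, recognise the outcome as the reflection arrangement of type $A_3$, and then use Lemma \ref{near pencil lemma} to cap the number of additional lines. I work throughout in $\mathbb{P}^2(\mathbb{R})$, where $\mathcal{A}$ is a set of projective lines and the chambers are (spherical) triangles.

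First I would set up the local picture. Let $\mathfrak{g}_1,\mathfrak{g}_2\in\mathcal{A}$ be the two lines through $v$, labelled so that the neighbours occur in cyclic order $v_1,v_2,v_3,v_4$ with $v_1,v_3\in\mathfrak{g}_1$ and $v_2,v_4\in\mathfrak{g}_2$. Since $w(v)=2$, thinness forces exactly four chambers incident to $v$, namely the triangles $T_i$ with vertices $v,v_i,v_{i+1}$ (indices modulo $4$). The edge $v_iv_{i+1}$ of $T_i$ lies on a line of $\mathcal{A}$ through $v_i$ and $v_{i+1}$; call these four lines $c_{12},c_{23},c_{34},c_{41}$. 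Now the hypothesis $w(v_i)=3$ is decisive: the line $\mathfrak{g}_j$ through $v$ together with the two side lines bounding $T_{i-1}$ and $T_i$ already give three lines through $v_i$, so these are \emph{all} of them. In particular no further line of $\mathcal{A}$ passes through any of $v_1,\dots,v_4$, and the three lines through $v_1$ are exactly $\mathfrak{g}_1,c_{12},c_{41}$ (and symmetrically at $v_2,v_3,v_4$).

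Next I would pin down the global incidence pattern of these six lines $\mathcal{A}_0:=\{\mathfrak{g}_1,\mathfrak{g}_2,c_{12},c_{23},c_{34},c_{41}\}$, which are pairwise distinct. Their $\binom{6}{2}=15$ intersection pairs are distributed as follows: the pair $\{\mathfrak{g}_1,\mathfrak{g}_2\}$ at $v$, and three pairs at each of $v_1,v_2,v_3,v_4$, accounting for $1+4\cdot 3=13$ pairs. The remaining two pairs $\{c_{12},c_{34}\}$ and $\{c_{23},c_{41}\}$ yield two weight-two points $p:=c_{12}\cap c_{34}$ and $q:=c_{23}\cap c_{41}$; these cannot coincide, since a common point of all four $c$'s would consume six pairs rather than two. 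Thus $1+4\cdot 3+2=15$ exhausts every pair, so this is the full incidence structure, and it is precisely that of the reflection arrangement of type $A_3$ (the six lines $x_i=x_j$ in $\mathbb{P}^2$). That arrangement is simplicial and spherical, tiling the sphere into triangles; hence $\mathcal{A}\supseteq\mathcal{A}_0$ and $\mathcal{A}$ can only refine this complete tiling.

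It then remains to bound the extra lines, which I expect to be the main obstacle. By the weight hypotheses, any $\mathfrak{h}\in\mathcal{A}\setminus\mathcal{A}_0$ avoids $v,v_1,\dots,v_4$, so it meets $\mathfrak{g}_1$ and $\mathfrak{g}_2$ in two \emph{new} weight-two vertices $r_a,r_b$. Were the segment of $\mathfrak{h}$ between $r_a$ and $r_b$ free of further vertices, Lemma \ref{near pencil lemma} would force $\mathcal{A}$ to be a near pencil; but a near pencil of $n\ge 6$ lines has $n-1$ concurrent lines, whence at least five lines of $\mathcal{A}_0$ would pass through one point, contradicting that $\mathcal{A}_0$ has maximal concurrency three. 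Hence $\mathfrak{h}$ must pass through a further vertex of $\mathcal{A}_0$, and the only vertices not protected by the weight hypothesis are $p$ and $q$; tracing the incidences forces $\mathfrak{h}$ to be the line $pq$, producing the seven-line arrangement. The delicate point is to exclude a second extra line: one checks that such a line, having no admissible vertex left to absorb its crossings with $\mathfrak{g}_1,\mathfrak{g}_2$ (and with the new vertices $r_a,r_b$), again creates a vertex-free segment between two weight-two vertices and so triggers Lemma \ref{near pencil lemma}, a contradiction. This gives $|\mathcal{A}|\in\{6,7\}$; and since $\mathcal{A}$ finitely refines the complete triangular tiling $\mathcal{A}_0$ of the sphere, all its chambers are bounded triangles, so $\mathcal{A}$ is spherical.
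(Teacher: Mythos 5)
Your reconstruction of the six-line subarrangement is sound and is essentially the paper's first step: the paper likewise forms $\mathcal{B}=\lbrace \mathfrak{g}_{i,j}\rbrace$ (your $\mathcal{A}_0$), notes it is spherical and simplicial, and observes that no further line of $\mathcal{A}$ may pass through $v,v_1,\dots,v_4$. The genuine gap is in the last third, the exclusion of extra lines, which is where the entire content of the lemma sits. First, your appeal to Lemma \ref{near pencil lemma} requires $r_a,r_b$ to be vertices of weight two, but nothing prevents a second extra line from passing through $r_a$: the weight hypothesis protects only $v,v_1,\dots,v_4$. Second, and more seriously, the inference ``hence $\mathfrak{h}$ must pass through a further vertex of $\mathcal{A}_0$'' is a non sequitur: the negation of ``the segment between $r_a$ and $r_b$ is vertex-free'' is only that $\mathfrak{h}$ crosses \emph{some} line of $\mathcal{A}$ inside that segment, and that crossing can perfectly well be with one of $c_{12},c_{23},c_{34},c_{41}$ (or with another extra line) at a point which is \emph{not} a vertex of $\mathcal{A}_0$; such a crossing defuses the near-pencil contradiction without forcing $\mathfrak{h}$ through $p$ or $q$. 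Third, the two statements that actually finish the proof --- ``tracing the incidences forces $\mathfrak{h}$ to be the line $pq$'' and the exclusion of a second extra line (``one checks'') --- are assertions rather than arguments; once several extra lines are allowed to meet each other and the $c_{ij}$ in unforeseen coincidences, the distribution of weight-two vertices along $\mathfrak{h}$ is no longer under control, so the intended repetition of the Lemma \ref{near pencil lemma} trick does not go through as written.

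The paper closes exactly this gap by a different and more robust mechanism, which you could adopt: since $\mathcal{A}$ is simplicial and thin, the chamber of $\mathcal{A}$ on the far side of each edge $[v_i,v_{i+1}]$ is a triangle having $v_i,v_{i+1}$ among its vertices, so each of its three edges contains $v_i$ or $v_{i+1}$; a supporting line not in $\mathcal{B}$ would therefore pass through $v_i$ or $v_{i+1}$ and raise its weight above three. Hence these chambers of $\mathcal{A}$ coincide with chambers of $\mathcal{B}$, so no line of $\mathcal{A}\setminus\mathcal{B}$ meets the interior of any of the eight cells of $\mathcal{B}$ incident with $v,v_1,\dots,v_4$. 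Any extra line is thus confined to the four remaining cells of $\mathcal{B}$, whose only unprotected boundary vertices are $p$ and $q$; it can therefore meet $c_{12},c_{34}$ only at $p$ and $c_{23},c_{41}$ only at $q$, i.e.\ it is the unique line $pq$. Crucially, this argument applies to each extra line separately, however many there are --- precisely the uniformity that your near-pencil approach lacks.
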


\begin{proof}
We denote the lines intersecting in $v$ by $\mathfrak{g}_1, \mathfrak{g}_2$ and we agree that $v_1, v_3 \in \mathfrak{g}_1$ while $v_2, v_4 \in \mathfrak{g}_2$. It is clear that there are no further vertices lying in the bounded segment between $v_1$ and $v_4$ and the same is true for the bounded segments between $v_1$ and $v_2$, $v_2$ and $v_3$, $v_3$ and $v_4$. Denote the line passing through $v_i$ and $v_j$ by $\mathfrak{g}_{i,j}$ and observe that the spherical arrangement $\mathcal{B} \subset \mathcal{A}$ defined by $\mathcal{B}:=\lbrace \mathfrak{g}_{i,j} \mid 1 \leq i,j \leq 4 \rbrace$ is simplicial. Now by the above there are cells $K_{i,j}$ of $\mathcal{A}$ containing the vertices $v_i$ and $v_j$ for $\lbrace i, j \rbrace \in \left\lbrace \lbrace 1,4 \rbrace, \lbrace 1,2 \rbrace, \lbrace 2,3 \rbrace, \lbrace 3,4 \rbrace \right\rbrace$ and these cells are triangles. Suppose there was a line in $\mathcal{A}$ not contained in $\mathcal{B}$ supporting an edge of such a cell $K_{i,j}$. This edge needs to pass through either $v_i$ or $v_j$. But then the weight of either $v_i$ or $v_j$ needs to be strictly greater than three, contradicting our assumption. This shows that the only line one may add to $\mathcal{B}$ in such a way that the obtained arrangement is simplicial with the vertices $v_1, v_2, v_3, v_4$ having weight three is the line passing through the points $\mathfrak{g}_{1,2} \cap \mathfrak{g}_{3,4}$ and $\mathfrak{g}_{1,4} \cap \mathfrak{g}_{2,3}$.
\end{proof}

The next proposition and the following theorem are preliminary results which will be used to simplify the proof of Proposition 2.

\begin{proposition}
Let $\mathcal{A}$ be a Tits arrangement of rank three. Assume that $\mathcal{A}^*$ is contained in the union of two lines. Then $\mathcal{A}$ is a near pencil.
\end{proposition}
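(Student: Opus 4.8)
The plan is to translate the hypothesis into a statement about the lines of $\mathcal{A}$ by duality, and then to locate a single chamber on which Lemma~\ref{near pencil lemma} can be applied. Since $\mathcal{A}^*\subseteq \ell_1\cup\ell_2$ for two projective lines $\ell_1,\ell_2\subseteq(\mathbb{P}^2(\mathbb{R}))^*$, dualizing gives that every line of $\mathcal{A}$ passes through $p_1:=\ell_1^*$ or through $p_2:=\ell_2^*$ (recall $a^*\in\ell_i \Leftrightarrow \ell_i^*\in a$). Write $\mathfrak{g}_0$ for the line joining $p_1$ and $p_2$, and call a line of $\mathcal{A}$ a \emph{$p_1$-line} (resp. \emph{$p_2$-line}) if it passes through $p_1$ (resp. $p_2$); the line $\mathfrak{g}_0$ is the only line that can be of both types. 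The elementary but crucial observation is a weight computation: if $x$ is the intersection point of a $p_1$-line and a $p_2$-line with $x\notin\{p_1,p_2\}$, then $w(x)=2$. Indeed, any further line of $\mathcal{A}$ through $x$ must pass through $p_1$ or through $p_2$; but the line through $p_1$ and $x$ and the line through $p_2$ and $x$ are already determined, so no new line can occur.

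Next I would reduce the statement to producing a single chamber none of whose walls is $\mathfrak{g}_0$; call such a chamber \emph{good}. A chamber $K$ is a triangle whose three walls are not concurrent, and each wall is a $p_1$- or a $p_2$-line. If $K$ is good, then each wall is a pure $p_1$- or pure $p_2$-line, so by the pigeonhole principle two of them, say $H_1,H_2$, pass through the same point $p\in\{p_1,p_2\}$ and meet there, while the third wall $\mathfrak{g}:=H_3$ is a pure line through the other point $q$. The two remaining vertices $H_1\cap\mathfrak{g}$ and $H_2\cap\mathfrak{g}$ are distinct from $p_1$ and $p_2$ (here one uses that $H_1,H_2,\mathfrak{g}$ are pure lines), so by the weight computation both have weight two. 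As these two vertices are the endpoints of the edge of $K$ lying on $\mathfrak{g}$, there is no further vertex between them on $\mathfrak{g}$, and Lemma~\ref{near pencil lemma} applied to $\mathfrak{g}$ shows that $\mathcal{A}$ is a near pencil.

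It remains to produce a good chamber. If $\mathfrak{g}_0\notin\mathcal{A}$ then every chamber is good and we are finished, so assume $\mathfrak{g}_0\in\mathcal{A}$. Since every line other than $\mathfrak{g}_0$ meets $\mathfrak{g}_0$ in $p_1$ or in $p_2$, the only vertices lying on $\mathfrak{g}_0$ are $p_1$ and $p_2$; hence $\mathfrak{g}_0$ supports only boundedly many edges of the chamber complex and only finitely many chambers have $\mathfrak{g}_0$ as a wall. Consequently a good chamber exists as soon as $\mathcal{A}$ has enough chambers. The mechanism I expect to use is that if some $p\in\{p_1,p_2\}\cap\overline{T}$ carries at least three lines of $\mathcal{A}$, then among the angular sectors cut out at $p$ at least one is bounded by two lines different from $\mathfrak{g}_0$, and a chamber in that sector having $p$ as a vertex is good. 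The only configurations escaping this are those in which at most two lines pass through each of $p_1,p_2$, which forces $|\mathcal{A}|\le 3$; three non-concurrent lines each through $p_1$ or $p_2$ form a triangle all of whose vertices have weight two, so $\mathcal{A}$ is a near pencil by inspection.

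I expect the existence of a good chamber to be the main obstacle. One must rule out the degenerate possibility that every chamber is incident to $\mathfrak{g}_0$, and this mixes the combinatorics of the two pencils with the position of $p_1,p_2$ relative to $\overline{T}$ (in particular, whether a point carrying many lines is actually attained as a vertex inside $\overline{T}$, so that the angular-sector argument yields a genuine chamber). All of the remaining ingredients---the duality reformulation, the weight-two computation, and the final appeal to Lemma~\ref{near pencil lemma}---are routine once a good chamber is in hand.
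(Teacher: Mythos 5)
Your reduction step is sound, and it is a genuinely different packaging from the paper's argument: once you have a chamber none of whose walls is $\mathfrak{g}_0$, the pigeonhole argument together with the weight-two computation for mixed intersections does produce an edge bounded by two weight-two vertices, and Lemma~\ref{near pencil lemma} finishes. The genuine gap is exactly the step you flag yourself: the existence of a good chamber is never established, and neither of your two mechanisms closes it. The angular-sector argument at a point $p\in\{p_1,p_2\}$ carrying at least three lines only yields a chamber with apex $p$ if $p$ is actually attained as a vertex of the chamber complex inside the chosen sector; this can fail when $p\notin\overline{T}$, and it is unproven precisely when $p\in\partial T$ and lines of $\mathcal{A}$ (for instance lines through the other point $q$) accumulate at $p$ --- which is the typical affine, non--locally spherical situation this paper is concerned with (compare the arrangement $\tilde{A}^0_2$, where infinitely many lines meet at a boundary point). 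Your fallback, ``a good chamber exists as soon as $\mathcal{A}$ has enough chambers,'' is likewise left hanging: you bound the number of chambers having $\mathfrak{g}_0$ as a wall, but never show that $\mathcal{A}$ has more chambers than that bound, so the case distinction ``angular sector works, or $|\mathcal{A}|\le 3$'' does not exhaust all configurations.

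The gap is repairable within your framework: if $\mathcal{A}$ is infinite, then it has infinitely many chambers (every line of $\mathcal{A}$ is a wall of some chamber, by choosing a point of $H\cap T$ on no other line, and each chamber has only three walls), so a good chamber exists by your counting; if $\mathcal{A}$ is finite, thinness forces $T=V$, and the finite spherical cases with $|\mathcal{A}|\ge 4$ have more than four chambers, leaving only the three-line near pencil. But it is worth seeing how the paper avoids the problem entirely: it never searches for a chamber avoiding $\mathfrak{g}_0$. After dualizing, it splits on $w(v_1)\ge 3$ versus $w(v_1)=2$ and picks a line $\mathfrak{g}\in\mathcal{A}$ through one of the two centers and missing the other; then every vertex on $\mathfrak{g}$, with at most one exception (the other center), has weight two, so Lemma~\ref{near pencil lemma} applies to an edge on $\mathfrak{g}$ directly. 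The global existence statement that your proof hinges on is precisely what this choice of line makes unnecessary.
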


\begin{proof}
Suppose $\mathcal{A}^* \subset \mathfrak{l}_1 \cup \mathfrak{l}_2$. Then after dualizing the lines $\mathfrak{l}_1, \mathfrak{l}_2 \subset (\mathbb{P}^2(\mathbb{R}))^*$ become two points $v_1, v_2 \in \mathbb{P}^2(\mathbb{R})$. Suppose that $w(v_1)=|\mathcal{A}_{v_1}| \geq 3$ and pick a line $\mathfrak{g}$ of $\mathcal{A}$ such that $v_1 \notin \mathfrak{g}$ and $v_2 \in \mathfrak{g}$. Observe that there is at most one line in $\mathcal{A}_{v_1}$ meeting $\mathfrak{g}$ in a vertex of weight greater than two. Further, different lines in $\mathcal{A}_{v_1}$ produce different intersections with $\mathfrak{g}$.  Since $\mathfrak{g}$ contains only one vertex with weight possibly bigger than two, we may use Lemma \ref{near pencil lemma} to conclude that $\mathcal{A}$ is a near pencil. If on the other hand $w(v_1)= 2$ then we choose $\mathfrak{g} \in \mathcal{A}$ passing through $v_1$ but not through $v_2$. Then $\mathfrak{g}$ contains a segment bounded by two vertices of weight two. Hence by Lemma \ref{near pencil lemma} it follows that $\mathcal{A}$ is a near pencil.  
\end{proof}

\begin{remark}
We observe that in the situation of the last proposition there is a unique
$p \in \mathcal{A}^*$ such that $\mathcal{A}^* \setminus \lbrace p \rbrace$ is contained in one of the two lines $\mathfrak{l}_1, \mathfrak{l}_2$ while $p$ is contained in the other one. 
\end{remark}

\begin{theorem}
The near pencil is the only Tits arrangement $\mathcal{A}$ of rank three such that $\mathcal{A}^*$ lies on a conic.
\end{theorem}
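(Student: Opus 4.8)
The plan is to dualize and turn the hypothesis into a statement about tangent lines of a conic, after which Lemma~\ref{near pencil lemma} finishes the job almost immediately. Write $\sigma \subset (\mathbb{P}^2(\mathbb{R}))^*$ for the conic containing $\mathcal{A}^*$. First I would dispose of the degenerate cases: if $\sigma$ is degenerate, its real locus is a pair of (possibly equal or complex-conjugate) lines or a single point, and in every such case $\mathcal{A}^*$ is contained in a union of two lines, so the preceding Proposition already yields that $\mathcal{A}$ is a near pencil. Since $\mathcal{A}^* \neq \emptyset$ consists of real points, a nondegenerate $\sigma$ automatically has nonempty real locus, so from here on I may assume $\sigma$ is a nondegenerate real conic.

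Next I would dualize using that a nondegenerate conic is a selfdual curve: the dual $\sigma^*$ is again a nondegenerate conic in $\mathbb{P}^2(\mathbb{R})$, and the points of $\sigma$ correspond precisely to the tangent lines of $\sigma^*$. Hence each $p \in \mathcal{A}^* \subset \sigma$ dualizes to a line $p^* \in \mathcal{A}$ tangent to $\sigma^*$, i.e. every line of $\mathcal{A}$ is tangent to the fixed conic $\sigma^*$.

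The decisive geometric input is then that through any point of $\mathbb{P}^2(\mathbb{R})$ there pass at most two real tangent lines to a nondegenerate conic. Consequently no three lines of $\mathcal{A}$ are concurrent, so every vertex $v$ of $\mathcal{A}$ has weight $w(v)=|\mathcal{A}_v|=2$. Now I would simply pick any chamber $K$ of $\mathcal{A}$: being a simplicial arrangement of rank three, $K$ is a triangle, and any one of its three edges is a bounded segment lying on a line $\mathfrak{g}\in\mathcal{A}$ whose two endpoints $v_1,v_2$ are vertices with no further vertex between them. Since $w(v_1)=w(v_2)=2$, Lemma~\ref{near pencil lemma} forces $\mathcal{A}$ to be a near pencil, which is exactly the assertion.

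I expect the only real friction to be the case analysis for degenerate and imaginary conics in the first paragraph (showing each reduces to the union-of-two-lines situation handled by the preceding Proposition) and the clean extraction, in the last paragraph, of a line carrying two consecutive weight-two vertices that bound an empty segment. The latter is where the general-position picture must be made rigorous, and I would anchor it on the fact that chambers of a rank three Tits arrangement are triangles, so that every wall is genuinely subdivided into edges by its vertices; the selfduality and the ``at most two tangents'' fact are classical and carry the rest of the argument with essentially no computation.
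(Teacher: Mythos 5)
Your proof is correct and follows essentially the same route as the paper's: a case split into degenerate and nondegenerate conics, with the degenerate cases dispatched by Proposition 1 and the nondegenerate case reduced to the bound that every vertex has weight at most two, whence Lemma \ref{near pencil lemma} applied to an edge of a chamber yields a near pencil. The only cosmetic differences are that you invoke selfduality and the ``at most two tangents through a point'' fact where the paper applies B\'ezout's theorem to the dual line $v^*$ (these are dual formulations of the same statement), and that you spell out the final step which the paper leaves implicit.
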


\begin{proof}
Let $P \in \mathbb{R}[x,y,z]$ be a homogeneous polynomial of degree two and set $\sigma:=V(P)$.
Suppose that $\mathcal{A}^* \subset \sigma \subset (\mathbb{P}^2(\mathbb{R}))^*$ for some rank three Tits arrangement $\mathcal{A}$. First, assume that $P$ is the product of two distinct linear polynomials. Then by Proposition 1 the only Tits arrangements lying on $\sigma$ are near pencils. If $P$ factors as a square of a linear polynomial, then every $p \in \mathcal{A}^*$ lies on a single line which means that all lines of $\mathcal{A}$ pass through a single point. Hence $\mathcal{A}$ is not simplicial. Now, finally suppose that $P$ is irreducible. By Bézouts theorem every line meets $\sigma$ in at most two points. Hence the weight of any vertex of $\mathcal{A}$ is bounded by two. But this implies that $\mathcal{A}$ is a near pencil consisting of three lines. 
\end{proof}

The next proposition is a first step towards our main theorem. 

\begin{proposition}
Let $\mathcal{A}$ be an affine Tits arrangement of rank three. Suppose that $\mathcal{A}^*$ is contained in the union of at most three lines. Then $\mathcal{A}$ is either a near pencil or it is an arrangement of type $\tilde{A_2}$.
\end{proposition}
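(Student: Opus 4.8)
The plan is to work entirely in the dual picture and reduce everything to counting the sizes of three pencils. If $\mathcal{A}^*$ already lies on two of the lines, then Proposition 1 applies and $\mathcal{A}$ is a near pencil, so I may assume $\mathcal{A}^*$ genuinely requires three lines $\mathfrak{l}_1,\mathfrak{l}_2,\mathfrak{l}_3\subset(\mathbb{P}^2(\mathbb{R}))^*$. Dualizing, I set $A_i:=\mathfrak{l}_i^*$ and note that a point of $\mathcal{A}^*$ on $\mathfrak{l}_i$ corresponds exactly to a line of $\mathcal{A}$ through $A_i$; hence $\mathcal{A}$ is the union of three pencils centred at $A_1,A_2,A_3$, with $w(A_i)$ equal to the number of points of $\mathcal{A}^*$ on $\mathfrak{l}_i$. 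A cheap but crucial observation is the weight bound: any vertex $v$ with $v^*\notin\{\mathfrak{l}_1,\mathfrak{l}_2,\mathfrak{l}_3\}$ satisfies $w(v)\le 3$, since the line $v^*$ meets $\mathfrak{l}_1\cup\mathfrak{l}_2\cup\mathfrak{l}_3$ in at most three points. So only the three centres $A_i$ can carry large weight.

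Next I would record the fact that organizes the case distinction: the centre of an infinite pencil must lie on $\partial T$. Indeed, if $A_i$ had infinite weight but $A_i\notin\partial T$, then the infinitely many lines of $\mathcal{A}$ through $A_i$ meet the projective line $\partial T$ in infinitely many distinct points, which accumulate at some $x_0\in\partial T$; choosing an interior point of $T$ close to $x_0$ then contradicts local finiteness, exactly as in the proof of Lemma \ref{nur ein dicker vertex lemma}. Since local finiteness also forbids $A_i\in T$, this forces $A_i\in\partial T$. Equivalently, an infinite pencil is a family of parallel lines in the affine chart $T=\mathbb{P}^2(\mathbb{R})\setminus\partial T$.

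With these tools the proof runs by counting how many of the three pencils are infinite. A finite arrangement is spherical, hence not affine, so at least one pencil must be infinite. If exactly one pencil is infinite, its parallel lines cut $T$ into strips, and the finitely many remaining lines leave infinitely many strips uncut; these are non-simplicial chambers, a contradiction. If exactly two pencils are infinite, their parallel families tile $T$ with parallelograms (the centres being distinct points of $\partial T$), and again finitely many further lines cannot triangulate all of them, contradicting simpliciality. Therefore all three pencils are infinite, whence $A_1,A_2,A_3\in\partial T$ are collinear; dually this is precisely the case where $\mathfrak{l}_1,\mathfrak{l}_2,\mathfrak{l}_3$ are concurrent.

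It remains to identify the surviving configuration. Here $\mathcal{A}$ consists of three infinite families of parallel lines, one for each direction $A_1,A_2,A_3$ on the line at infinity $\partial T$, and simpliciality forces these families to form a $3$-net: every intersection of a line from one family with a line from another must lie on a line of the third, since otherwise a parallelogram cell survives, or equivalently two consecutive weight-two vertices appear on some line and Lemma \ref{near pencil lemma} would make $\mathcal{A}$ a near pencil. A $3$-net built from three pencils of parallel lines is, up to an affine transformation, the regular triangular net, i.e.\ the arrangement of type $\tilde{A}_2$. The main obstacle I anticipate lies in the two rigidity steps: the accumulation argument placing infinite-pencil centres on $\partial T$, and the final claim that simpliciality leaves no freedom beyond the $\tilde{A}_2$ net (ruling out irregular spacings of the parallel families). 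Both should reduce to a careful local analysis along a single line, combined with the weight bound and Lemma \ref{near pencil lemma}.
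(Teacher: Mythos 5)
Your overall strategy is essentially the paper's proof in a different order: dualize to three pencils, bound the weight of non-centre vertices by three, place the centres of infinite pencils on $\partial T$, and split according to how many pencils are infinite. However, two of your case arguments have genuine gaps. In the case of exactly one infinite pencil, your contradiction fails as stated: every line of $\mathcal{A}$ not through the centre $A_1$ meets every line of the pencil at $A_1$, hence crosses \emph{every} strip, so ``finitely many remaining lines leave infinitely many strips uncut'' is simply false. Indeed, one infinite pencil plus a single transversal is exactly the affine near pencil, which \emph{is} simplicial (each half-strip is projectively a triangle with one vertex at $A_1\in\partial T$), so no argument of this shape can yield a contradiction without using your standing assumption that three lines are genuinely needed. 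That assumption does rescue the conclusion---it guarantees at least two lines off the pencil, and a strip avoiding their finitely many pairwise intersection points then contains a quadrilateral cell---but this is a different argument from the one you wrote; the paper instead just invokes Lemma \ref{near pencil lemma} and records ``near pencil'' as an admissible outcome of this case.

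More seriously, in the case of three infinite pencils your derivation of the $3$-net condition rests on a false dichotomy. A crossing $v$ of lines from two families carrying no line of the third family need not produce ``a parallelogram cell'' nor ``two consecutive weight-two vertices on some line'': all four cells at $v$ can be simplicial (for instance half-strip triangles with a vertex on $\partial T$) while all four neighbours of $v$ have weight three. Ruling out precisely this configuration is the content of Lemma \ref{4 mal 3  lemma}, which you never invoke: since all neighbours of a weight-two vertex have weight at most three by your weight bound, either some neighbour also has weight two---then Lemma \ref{near pencil lemma} gives a near pencil, impossible with three infinite pencils---or all four neighbours have weight three, and Lemma \ref{4 mal 3  lemma} then forces $\mathcal{A}$ to be spherical, contradicting affineness. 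This is exactly how the paper obtains ``every vertex in $T$ has weight three'', which is what actually yields the net; without Lemma \ref{4 mal 3  lemma} (or a substitute for its non-trivial proof) your step ``simpliciality forces a $3$-net'' is unsupported. Your final rigidity claim, that a $3$-net of three parallel families is affinely the triangular net $\tilde{A}_2$, is correct modulo a short discrete-subgroup argument and is at the same level of detail as the paper's ``it is easy to see''.
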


\begin{proof}
Taking into account Theorem 1 it is enough to consider the case where $\mathcal{A}^*$ is contained in the union of exactly three lines: $\mathcal{A}^* \subset \mathfrak{l}_1 \cup \mathfrak{l}_2 \cup \mathfrak{l}_3 \subset (\mathbb{P}^2(\mathbb{R}))^*$.  We define $v_1:=\mathfrak{l}_1^*, v_2:=\mathfrak{l}_2^*, v_3:=\mathfrak{l}_3^* \in \mathbb{P}^2(\mathbb{R})$ and consider two cases: \\
a) Suppose that $\mathfrak{l}_1 \cap \mathfrak{l}_2 \cap \mathfrak{l}_3 =:w$. Then the corresponding points $v_1, v_2, v_3$ all lie on the line $w^* \subset \mathbb{P}^2(\mathbb{R})$. If $| \mathcal{A}_{v_i} |=| \mathcal{A}_{v_j} |= \infty$ for two different values $i,j$, then we have $w^* = \partial T$ because $\mathcal{A}$ is locally finite in $T$.\\
Let $k \neq i,j$ and assume that $| \mathcal{A}_{v_k} |<\infty$. Then it is easy to see that $\mathcal{A}$ contains a segment bounded by two vertices of weight two. By Lemma \ref{near pencil lemma} we may assume that $| \mathcal{A}_{v_k} |=\infty$. 
Observe that all vertices in $T$ have weight bounded by three. But since $\mathcal{A}$ is not spherical, Lemma \ref{4 mal 3  lemma} shows that every vertex in $T$ has weight exactly three. From this it is easy to see that $\mathcal{A}$ is of type $\tilde{A}_2$. \\
Now suppose that there is precisely one $i$ such that $| \mathcal{A}_{v_i} |= \infty$ and pick a line $\mathfrak{g} \in \mathcal{A}$ such that $v_j \in \mathfrak{g}, v_i \notin \mathfrak{g}$ for some $j \neq i$. Then it is easy to see that $\mathfrak{g}$ contains a segment bounded by two vertices of weight two. Hence by Lemma \ref{near pencil lemma} the arrangement $\mathcal{A}$ is a near pencil.  \\
b) Assume $\mathfrak{l}_1 \cap \mathfrak{l}_2 \cap \mathfrak{l}_3=\emptyset$. Then the three points $v_1, v_2, v_3$ are not collinear. Hence it is impossible to have $| \mathcal{A}_{v_i} |= \infty$ for all $1 \leq i \leq 3$. But then we may assume that $| \mathcal{A}_{v_1} |= \infty$ and $| \mathcal{A}_{v_3} |< \infty$. Now if $| \mathcal{A}_{v_2} |< \infty$ as well, then we may argue as in case a) to show that $\mathcal{A}$ is a near pencil. So assume that $| \mathcal{A}_{v_1} |=| \mathcal{A}_{v_2} |= \infty$ and  $| \mathcal{A}_{v_3} |< \infty$. Again, we may argue as in case a) to conclude that $\mathcal{A}$ is a near pencil.  
\end{proof}

\begin{remark}
If we drop the condition on $\mathcal{A}$ to be affine, then we find some more possible (spherical) arrangements such that $A^*$ is contained in the union of three lines: for instance the arrangement of type $A(10,3)$ (as denoted in \cite{p-G-09}) and some of its subarrangements.
\end{remark}

Our next goal is to show that there is no affine Tits arrangement $\mathcal{A}$ such that $\mathcal{A}^*$ is contained in the locus of an irreducible homogeneous cubic polynomial. This may be deduced from Lemma \ref{4 mal 3  lemma} and the following result:

\begin{lemma} \label{a2_affin lemma}
Let $\mathcal{A}$ be an affine Tits arrangement of rank three. Assume that every vertex of $\mathcal{A}$ has weight three and suppose that $\mathcal{A}^* \subset V(F)$ for some homogeneous cubic polynomial $F$. Then $F$ is not irreducible.
\end{lemma}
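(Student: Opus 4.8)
The plan is to assume $F$ irreducible and produce a vertex of weight two, contradicting the hypothesis. Write $C:=V(F)$ for the resulting irreducible cubic in $(\mathbb{P}^2(\mathbb{R}))^*$. Since $\mathcal{A}$ is affine it is infinite, hence $\mathcal{A}^*$ is infinite as well. At most one point of $\mathcal{A}^*$ is the (nodal or cuspidal) singular point of $C$, so after deleting it I may work inside the smooth locus $C^{\mathrm{sm}}$, which carries a group law; I fix a real inflection point as the neutral element $0$, so that three smooth points are collinear exactly when they sum to $0$. By Bézout a line meets $C$ in at most three points, so the weight-three hypothesis says precisely that for distinct $p,q\in\mathcal{A}^*$ the third intersection of $\overline{pq}$ with $C$, namely $-p-q$, again lies in $\mathcal{A}^*$. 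Thus $\mathcal{A}^*$ is closed under the chord operation $(p,q)\mapsto-p-q$.

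The key step is to upgrade this closure to a coset structure. Composing two chords yields $p+q-r\in\mathcal{A}^*$ for points of $\mathcal{A}^*$, so $\mathcal{A}^*$ is closed under $(p,q,r)\mapsto p+q-r$; fixing $s_0\in\mathcal{A}^*$, a routine verification then shows that $H:=\mathcal{A}^*-s_0$ is a subgroup of $C^{\mathrm{sm}}(\mathbb{R})$ and that $\mathcal{A}^*=s_0+H$. Substituting this coset back into the chord closure, the element $-p-q=-2s_0-(h+h')$ must lie in $s_0+H$ for all $h,h'\in H$, which forces $3s_0\in H$. But then for every $s=s_0+h\in\mathcal{A}^*$ I compute $-2s=-3s_0-2h+s_0\in s_0+H$, that is $-2s\in\mathcal{A}^*$.

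This gives the contradiction. As $\mathcal{A}^*$ is infinite while $C$ has only finitely many inflection points, I may pick a non-inflection $s\in\mathcal{A}^*$; then $-2s\neq s$, and the tangent to $C$ at $s$ meets $C$ only at $s$ (with multiplicity two) and at $-2s$ (with multiplicity one). Hence this tangent, which is the line dual to the vertex $s^*\cap(-2s)^*$, carries exactly the two points $s,-2s$ of $\mathcal{A}^*$; so that vertex has weight two, contradicting the assumption that every vertex has weight three. Therefore $F$ is not irreducible.

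The step I expect to be the main obstacle is the rigorous passage from the binary chord closure to the coset description, where one must dispose of the degenerate triples (an intermediate chord coinciding with the third chosen point, or three already collinear points) when checking closure and inverses in $H$; this is exactly where the infinitude of $\mathcal{A}^*$ is used. Everything afterwards is a direct computation. It is worth stressing that irreducibility is used decisively: only for an irreducible $C$ does the tangent at a smooth point meet the cubic in a single further point $-2s$, so the argument correctly fails for the reducible cubics underlying the admissible arrangements $\tilde{A}_2$ and $\tilde{A}^0_2$.
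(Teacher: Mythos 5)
Your opening claim --- that the weight-three hypothesis ``says precisely'' that for distinct $p,q\in\mathcal{A}^*$ the third intersection of $\overline{pq}$ with $C$ again lies in $\mathcal{A}^*$ --- is unjustified, and it is exactly where the real difficulty of this lemma sits. In this paper a vertex is a $0$-simplex of the chamber complex $\mathcal{S}(\mathcal{A},T)$, so the weight-three hypothesis constrains only such points. Two lines of $\mathcal{A}$ may meet at a point of $\partial T$, i.e.\ be parallel in the affine chart $\mathbb{E}=\mathbb{P}^2(\mathbb{R})\setminus\partial T$, and such an intersection point need not be a vertex: in the arrangement of type $\tilde{A}_2$ every vertex has weight three, yet each of the three points at infinity carries infinitely many lines of the arrangement --- those points are simply not faces of any chamber. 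Hence for a parallel pair $p,q\in\mathcal{A}^*$ nothing forces a third point of $\mathcal{A}^*$ onto $\overline{pq}$. Worse, the closure genuinely fails at such pairs: since $\mathcal{A}^*$ is infinite and its points can accumulate only at $(\partial T)^*$, one has $(\partial T)^*\in C$, and for a parallel pair the third intersection of $\overline{pq}$ with $C$ is precisely $(\partial T)^*$, which does not lie in $\mathcal{A}^*$ because $\partial T\notin\mathcal{A}$. Your chord-closure claim, if granted, would assert that $\mathcal{A}$ contains no two parallel lines at all; but the paper's proof shows the hypotheses force the opposite conclusion --- there must be infinitely many mutually parallel lines, whose dual points all lie on a line through $(\partial T)^*$, and that line is then a component of $V(F)$. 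This is how reducibility actually arises, so your first step assumes away exactly the phenomenon the lemma is about.

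Granted full chord closure, the rest of your argument is essentially sound: the coset computation is correct, and the final step even lands where you need it (the tangent at a non-inflection $s$ meets $C$ only in $s$ and $-2s$, both in $\mathcal{A}^*$, so it misses $(\partial T)^*$ and the resulting weight-two point lies in $T$ and really is a vertex). But a repair would require proving closure only for pairs with $(\partial T)^*\notin\overline{pq}$ (equivalently $p+q\neq -(\partial T)^*$ in the group law) and then showing that this one-forbidden-partner-per-point defect does not obstruct the passage to the coset structure --- genuine additional work, not the bookkeeping about degenerate triples that you flag as the main obstacle. A smaller flaw: you cannot simply ``delete'' a possible singular point of $C$ from $\mathcal{A}^*$, since its dual line still belongs to $\mathcal{A}$ and still contributes to weights (one can show the singular point never lies in $\mathcal{A}^*$, but the quick argument again runs through the parallel/non-parallel dichotomy). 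For contrast, the paper avoids the group law entirely: it works in the affine chart, fixes a line $\mathfrak{g}$, and by a combinatorial induction on a notion of distance shows that every weight-three vertex of the subarrangement of lines non-parallel to $\mathfrak{g}$ must lie on $\mathfrak{g}$, forcing infinitely many parallels to $\mathfrak{g}$ and hence a linear factor of $F$.
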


\begin{proof}
Consider the affine space $\mathbb{E}:=\mathbb{P}^2(\mathbb{R}) \setminus \lbrace \partial T \rbrace$ and look at the arrangement induced by $\mathcal{A}$ on $\mathbb{E}$; by abuse of notation we denote this arrangement by $\mathcal{A}$ as well.\\ 
Fix a line $\mathfrak{g}$ of the arrangement $\mathcal{A}$. Denote by $\mathcal{A}_{\mathfrak{g}}$ the set of all lines in $\mathcal{A}$ which are not parallel to $\mathfrak{g}$ and set $\mathcal{A}^\prime:=\mathcal{A}_{\mathfrak{g}} \cup \lbrace \mathfrak{g} \rbrace \subset \mathcal{A}$. Observe that if $\mathfrak{g}^\prime$ is a line of $\mathcal{A}$ parallel to $\mathfrak{g}$, then $\mathcal{A}_{\mathfrak{g}}=\mathcal{A}_{\mathfrak{g}^\prime}$. Assume that there is a vertex $v$ of $\mathcal{A}^\prime$ of weight three not lying on $\mathfrak{g}$. We may choose $v$ in such a way that there is a line $\mathfrak{g}_v \in \mathcal{A}^\prime$ passing through $v$ such that the bounded segment on $\mathfrak{g}_v$ reaching from $v$ to $\mathfrak{g} \cap \mathfrak{g}_v$ does not contain any other vertex of $\mathcal{A}^\prime$ of weight three.  We say that $v$ has \textit{distance} $k$ to $\mathfrak{g}$ if $k$ is minimal with the property that there is a line $\mathfrak{g}^\prime \in \mathcal{A}^\prime$ such that the interior of the bounded segment on $\mathfrak{g}^\prime$ reaching from $v$ to $\mathfrak{g}^\prime \cap \mathfrak{g}$ contains exactly $k$ vertices of $\mathcal{A}^\prime$ all of which have weight two.\\
Let us first consider the case where $v$ has distance zero to $\mathfrak{g}$. We will show that then there must be a vertex of $\mathcal{A}$ of weight three, contradicting our assumption on $\mathcal{A}$. There are two possibilities: either there are two lines $\mathfrak{g}_1, \mathfrak{g}_2$ passing through $v$ such that there is no vertex of weight two of $\mathcal{A}^\prime$ contained in the bounded segments reaching from $v$ to $\mathfrak{g} \cap \mathfrak{g}_1, \mathfrak{g} \cap \mathfrak{g}_2$ respectively, or there is only one such line. Consider the first possibility. Let $\mathfrak{g}_1, \mathfrak{g}_2$ be as above and denote by $\mathfrak{g}_3$ the third line passing through $v$. Similarly, denote by $\mathfrak{g}_4$ the third line passing through $\mathfrak{g} \cap \mathfrak{g}_2$ and assume that the bounded segment $s$ on $\mathfrak{g}_3$ reaching from $v$ to $\mathfrak{g} \cap \mathfrak{g}_3$ contains the vertex $\mathfrak{g}_3 \cap \mathfrak{g}_4$. Using the fact that there can be only finitely many lines of $\mathcal{A}^\prime$ passing through the bounded segment on $\mathfrak{g}$ reaching from $\mathfrak{g} \cap \mathfrak{g}_3$ to $\mathfrak{g} \cap \mathfrak{g}_2$, we see that $\mathcal{A}^\prime$ has a vertex $w$ of weight two contained in $s$. As by assumption every vertex of $\mathcal{A}$ has weight three, there must be a line $\mathfrak{g}_0 \in \mathcal{A}$ passing through $w$ which is parallel to $\mathfrak{g}$. But then there is a vertex of weight two of $\mathcal{A}$ contained in the line $\mathfrak{g}_2$, a contradiction. Now we deal with the second possibility. Denote the three lines passing through $v$ again by $\mathfrak{g}_1, \mathfrak{g}_2, \mathfrak{g}_3$. We may assume that the bounded segment on $\mathfrak{g}_1$ reaching from $v$ to $\mathfrak{g} \cap \mathfrak{g}_1$ does not contain any vertices of $\mathcal{A}^\prime$. Moreover, we may assume that $  \mathfrak{g} \cap \mathfrak{g}_1$ is not contained in the bounded segment on $\mathfrak{g}$ reaching from $\mathfrak{g} \cap \mathfrak{g}_2$ to $\mathfrak{g} \cap \mathfrak{g}_3$ and that $  \mathfrak{g} \cap \mathfrak{g}_3$ is not contained in the bounded segment on $\mathfrak{g}$ reaching from $\mathfrak{g} \cap \mathfrak{g}_1$ to $\mathfrak{g} \cap \mathfrak{g}_2$.  Again, using the fact that there are only finitely many lines of $\mathcal{A}^\prime$ passing through the bounded segment on $\mathfrak{g}$ reaching from $\mathfrak{g} \cap \mathfrak{g}_1$ to $\mathfrak{g} \cap \mathfrak{g}_2$, we conclude that $\mathcal{A}^\prime$ must have a vertex $w^\prime$ of weight two contained in the bounded segment on line $\mathfrak{g}_2$ reaching from $v$ to $ \mathfrak{g}  \cap \mathfrak{g}_2$. Thus, there must be a line in $\mathcal{A}$ parallel to $\mathfrak{g}$ and passing through $w^\prime$. But then $\mathcal{A}$ must have a vertex of weight two, contradicting our assumption.\\
Now assume that the distance from $v$ to $\mathfrak{g}$ is greater than zero and call it $k$. Let $\mathfrak{g}^\prime \in \mathcal{A}^\prime$ be a line passing through $v$ and containing exactly $k$ vertices $v_0, ..., v_{k-1}$ of weight two of $\mathcal{A^\prime}$ between $v$ and $ \mathfrak{g}  \cap \mathfrak{g}^\prime$. Without loss of generality we may assume that $v_{k-1}$ is closest to $v$. As the arrangement $\mathcal{A}$ has only vertices of weight three we conclude that there must be a line $\mathfrak{g}^{\prime \prime}$ parallel to $\mathfrak{g}$ and passing through $v_{k-1}$. Now consider the arrangement $\mathcal{A}^{\prime  \prime}:=\mathcal{A}_\mathfrak{g} \cup \lbrace \mathfrak{g}^{\prime \prime} \rbrace \subset \mathcal{A}$. We observe that $\mathcal{A}^\prime$ and $\mathcal{A}^{\prime  \prime}$ differ only by one line not belonging to $\mathcal{A}_\mathfrak{g}$, hence $v$ is also a vertex of $\mathcal{A}^{\prime  \prime}$ and its distance to $\mathfrak{g}^{\prime  \prime}$ is zero. As above this implies that $\mathcal{A}$ has a vertex of weight two, which is impossible by assumption. Hence $\mathcal{A}^\prime$ has no vertex $v$ as above: every vertex of $\mathcal{A}^\prime$ of weight three must lie on $\mathfrak{g}$. This shows that $\mathcal{A}$ must contain infinitely many lines which are parallel to $\mathfrak{g}$. But then $F$ cannot be irreducible as it must contain a linear factor corresponding to the infinitely many lines of $\mathcal{A}$ parallel to $\mathfrak{g}$.  
\end{proof}

\begin{corollary}
There is no affine Tits arrangement $\mathcal{A}$ of rank three such that $\mathcal{A}^*$ is contained in the locus of an irreducible homogeneous polynomial of degree three.
\end{corollary}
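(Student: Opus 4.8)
The plan is to argue by contradiction: suppose $\mathcal{A}$ is an affine rank three Tits arrangement with $\mathcal{A}^*\subset V(F)$ and $F$ irreducible of degree three, and reduce everything to the two lemmas above. The first step is to bound the weights. Dualizing, a vertex $v$ of $\mathcal{A}$ corresponds to a line $v^*\subset(\mathbb{P}^2(\mathbb{R}))^*$, and the lines of $\mathcal{A}$ through $v$ correspond exactly to the points of $\mathcal{A}^*$ lying on $v^*$; since $v^*$ meets the irreducible cubic $V(F)$ in at most three points by Bézout's theorem, every vertex satisfies $w(v)\le 3$. As $\mathcal{A}$ is simplicial, each vertex carries at least two lines, so every weight is either $2$ or $3$. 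I also record that an affine arrangement cannot be spherical: its lines accumulate at $\partial T$ (as in the proof of Lemma \ref{nur ein dicker vertex lemma}), so $\mathcal{A}$ is infinite.

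If every vertex of $\mathcal{A}$ has weight three, then Lemma \ref{a2_affin lemma} applies verbatim and forces $F$ to be reducible, contradicting the hypothesis. Hence it remains to exclude the existence of a vertex $v$ of weight two, and this is where Lemma \ref{4 mal 3  lemma} enters. Let $\mathfrak{g}_1,\mathfrak{g}_2$ be the two lines through such a $v$. By thinness $v$ is surrounded by four chambers, each a triangle, and reading off the remaining triangle vertices produces four neighbouring vertices $v_1,v_3\in\mathfrak{g}_1$ and $v_2,v_4\in\mathfrak{g}_2$, one on each side of $v$ along each line.

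I then claim each $v_i$ has weight three. Indeed, if some $v_i$ had weight two, then $\mathfrak{g}_1$ or $\mathfrak{g}_2$ would contain the two weight-two vertices $v,v_i$ with no vertex strictly between them, whence Lemma \ref{near pencil lemma} would make $\mathcal{A}$ a near pencil; but in a near pencil all lines but one pass through a common point, and since $\mathcal{A}$ is infinite that point would have infinite weight, contradicting the Bézout bound $w\le 3$. With all four neighbours of weight three, Lemma \ref{4 mal 3  lemma} forces $\mathcal{A}$ to be spherical, contradicting that an affine arrangement is infinite. Thus no weight-two vertex exists, and the previous paragraph concludes.

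The main obstacle I expect lies in the bookkeeping of the local picture at a weight-two vertex: checking that the four neighbours genuinely exist as honest vertices of $\mathcal{A}$ (rather than as edges running off into $\partial T$, a possibility controlled by Lemma \ref{nur ein dicker vertex lemma}) and that the hypotheses of Lemma \ref{4 mal 3  lemma} are met. The substantive content is already carried by Lemma \ref{a2_affin lemma}; the corollary itself is a short case analysis once the Bézout weight bound and the fact that affine arrangements are never spherical are in place.
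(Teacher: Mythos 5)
Your proof is correct and takes essentially the same route as the paper: Bézout's theorem bounds every vertex weight by three, Lemma \ref{4 mal 3  lemma} (together with Lemma \ref{near pencil lemma}) rules out weight-two vertices in the affine setting, and Lemma \ref{a2_affin lemma} then contradicts irreducibility. The only difference is one of detail: you spell out the near-pencil exclusion (via the Bézout bound on the would-be infinite-weight center) and the weight-three neighbour analysis, which the paper compresses into the single assertion that each vertex of a simplicial affine arrangement has weight exactly three.
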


\begin{proof}
Consider an arrangement $\mathcal{A}$ of lines in the real projective plane such that $\mathcal{A}^* \subset V(P) \subset (\mathbb{P}^2(\mathbb{R}))^*$ for some irreducible $P \in \mathbb{R}[x,y,z]$ with $\deg(P)=3$. Let $v \in \mathbb{P}^2(\mathbb{R})$ be an arbitrary vertex of $\mathcal{A}$. Then in the dual setting $v^*$ is given by a line and the weight of $v$ is bounded by $|v^* \cap V(P)|$. Bézout's theorem gives $|v^* \cap V(P)| \leq \deg(v^*) \cdot \deg(P)=3$. Now if $\mathcal{A}$ is simplicial and affine then by Lemma \ref{4 mal 3  lemma} each vertex of $\mathcal{A}$ has weight exactly three. But then by Lemma \ref{a2_affin lemma} it follows that $P$ cannot be irreducible. 
\end{proof}

\begin{remark} \label{bezoutremark}
a) If one drops the assumption on $\mathcal{A}$ to be affine in Corollary 1, then the proof above shows that there are possible candidates for (spherical) Tits arrangements $\mathcal{A}$ such that $\mathcal{A}^* \subset V(P)$: namely all spherical arrangements having only vertices of weight two or three. Since these are precisely the arrangements $A(6,1), A(7,1)$ and the near pencils with at most four lines, we will not investigate this further. \\
b) If $\mathcal{A}^* \subset V(P)$ for some possibly reducible polynomial $P$, we may still apply Bézout's theorem to conclude the following: suppose that $P$ is a product of three linear factors. Then $\mathcal{A}$ has at most three vertices of weight possibly bigger than three and all other vertices have weight bounded by three. If on the other hand $P$ is the product of an irreducible quadratic factor and a linear factor, then $\mathcal{A}$ has at most one vertex of weight possibly bigger than three while all other vertices have weight bounded by three.  
\end{remark}

It remains to consider the possibility that $\mathcal{A}^*$ is contained in the locus of a cubic homogeneous polynomial having an irreducible quadratic factor. As preparation, we introduce some more notation.

\begin{definition} \label{consec def}
a) Let $\sigma$ be an irreducible conic in $\mathbb{P}^2(\mathbb{R})$ and consider a subset $M \subset \sigma$. There exists a projectivity $\Psi$ such that $\Psi(\sigma)$ is given by the polynomial $P:=x^2+y^2-z^2$ and is thus contained entirely in the affine $z=1$ patch of $\mathbb{P}^2(\mathbb{R})$. We say that $p_1,...,p_k \in M$ are \textit{consecutive with respect to} $\Psi$, if for any $1 \leq i \leq k-1$ it is true that one of the segments on $\Psi(\sigma)$ bounded by $\Psi(p_i),\Psi(p_{i+1})$ contains no other point of $\Psi(M)$.    \\
b) Consider the map $\phi: \mathbb{R}^3 \times \mathbb{R}^3 \longrightarrow \mathbb{R}^3$ sending $v_1,v_2 \in \mathbb{R}^3$ to their vector product $v_1 \times v_2$. This induces a map $\psi: \left( \mathbb{P}^2(\mathbb{R}) \times \mathbb{P}^2(\mathbb{R}) \right) \setminus \Delta \longrightarrow (\mathbb{P}^2(\mathbb{R}))^*$, where $\Delta:=\lbrace (x,x) \mid x \in \mathbb{P}^2(\mathbb{R}) \rbrace$. 
By a slight abuse of notation, we write $\psi(v_1,v_2) =v_1 \times v_2 \in (\mathbb{P}^2(\mathbb{R}))^*$ for two different projective points $v_1, v_2 \in \mathbb{P}^2(\mathbb{R})$. Observe that for $p,q \in (\mathbb{P}^2(\mathbb{R}))^*$ the vector product $p \times q$ gives the vertex in $\mathbb{P}^2(\mathbb{R})$ obtained as the intersection of the dual lines $p^*,q^*$. Similarly, if $v, v^\prime$ are two points in $\mathbb{P}^2(\mathbb{R})$, then the vector product $v \times v^\prime$ gives the point in $(\mathbb{P}^2(\mathbb{R}))^*$ which is dual to the line passing through $v$ and $v^\prime$. 
\end{definition}

Now we can prove the following statement (compare also \cite[Thm.\ 3.6]{p-C10b}, where case c) of the following proposition is examined for spherical Tits arrangements).

\begin{proposition} \label{conic line prop}
Suppose that $\mathcal{A}$ is an affine rank three Tits arrangement and assume that $\mathcal{A}^* \subset \sigma \cup \mathfrak{l}$ for some irreducible conic $\sigma \subset (\mathbb{P}^2(\mathbb{R}))^*$ and an arbitrary line $\mathfrak{l}\subset (\mathbb{P}^2(\mathbb{R}))^*$. Then the following statements hold:
\begin{enumerate}
\item[a)] $|\mathcal{A}^* \cap \sigma|=\infty$, unless $\mathcal{A}$ is a near pencil.
\item[b)] $|\mathcal{A}^* \cap \mathfrak{l}|=\infty$ and $(\partial T)^* \in \mathfrak{l}$.
\item[c)] If $|\sigma \cap \mathfrak{l}|=0$ then $\mathcal{A}$ is a near pencil.
\item[d)] If $|\sigma \cap \mathfrak{l}|=1$ then $\sigma \cap \mathfrak{l}=(\partial T)^*$, unless $\mathcal{A}$ is a near pencil.
\item[e)] If $|\sigma \cap \mathfrak{l}|=2$ then $\mathcal{A}$ is a near pencil.
\end{enumerate}
\end{proposition}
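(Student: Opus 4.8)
The plan is to bound all weights by Bézout, extract one accumulation principle from affineness, and then prove the five assertions in order, reserving (e) for last. Since $\mathcal{A}^*\subset\sigma\cup\mathfrak{l}$ and a line meets the conic $\sigma$ in at most two points and the line $\mathfrak{l}$ in one, every vertex $v$ with $v^*\neq\mathfrak{l}$ satisfies $w(v)=|v^*\cap\mathcal{A}^*|\le 3$, the only possible exception being $L:=\mathfrak{l}^*$ (where $L^*=\mathfrak{l}$). Affineness supplies the accumulation principle: $\mathcal{A}$ is infinite and its lines accumulate only on $\partial T$, since any limit line other than $\partial T$ would meet the open cone $T$, against local finiteness; dually, $(\partial T)^*$ is the unique accumulation point of $\mathcal{A}^*$, in line with Lemma \ref{nur ein dicker vertex lemma}.

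For (a), suppose $|\mathcal{A}^*\cap\sigma|<\infty$. If $\sigma$ carries at most two of these points they lie on a line, so $\mathcal{A}^*$ lies on two lines and Proposition~1 gives a near pencil; if $\sigma$ carries $n\ge 3$ of them, then $\mathfrak{l}$ carries infinitely many points and I would pick a pencil line $h^*$ ($h\in\mathcal{A}^*\cap\mathfrak{l}$) avoiding the finitely many positions for which a line through $h$ hits two points of $\mathcal{A}^*\cap\sigma$; on such an $h^*$ every vertex but $L$ has weight two, two neighbours bound a segment free of other vertices, and Lemma \ref{near pencil lemma} forces a near pencil. For (b), assuming $\mathcal{A}$ is not a near pencil, (a) yields $(\partial T)^*\in\sigma$, so the dual conic $\sigma^*$ is a parabola in $\mathbb{E}=\mathbb{P}^2(\mathbb{R})\setminus\partial T$; its convex interior meets no tangent line and hence must be cut by infinitely many pencil lines through $L$, forcing $|\mathcal{A}^*\cap\mathfrak{l}|=\infty$ and $(\partial T)^*\in\mathfrak{l}$. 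Granting (a) and (b), for non-near-pencils the unique accumulation point satisfies $(\partial T)^*\in\sigma\cap\mathfrak{l}$, which gives (c) at once (the intersection cannot be empty) and (d) (it must equal $(\partial T)^*$).

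In case (e) we have $\sigma\cap\mathfrak{l}=\{a,b\}$, and I may assume $\mathcal{A}$ is not a near pencil and $(\partial T)^*=a$, so that $b$ is not an accumulation point and only finitely many points of $\mathcal{A}^*$ lie near $b$. The decisive line is $b^*$: it is tangent to $\sigma^*$ (as $b\in\sigma$) and passes through $L$ (as $b\in\mathfrak{l}$). If $b\in\mathcal{A}^*$, then $b^*\in\mathcal{A}$, and since any line through $b$ meets $\sigma\cup\mathfrak{l}$ only in $b$ and one further point, every vertex of $\mathcal{A}$ on $b^*$ other than $L$ has weight exactly two; these vertices, indexed by $\mathcal{A}^*\cap\sigma$, accumulate only at $L$, so two of them are neighbours bounding a segment with no other vertex, and Lemma \ref{near pencil lemma} makes $\mathcal{A}$ a near pencil, a contradiction.

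The hard part will be the remaining case $b\notin\mathcal{A}^*$, where $b^*$ is unavailable as a line of $\mathcal{A}$. The structural fact separating (e) from the genuine arrangement $\tilde{A}^0_2$ of case (d) is that $L=\mathfrak{l}^*$ is \emph{not} the axis direction of the parabola $\sigma^*$: for a tangent $\mathfrak{l}$ the pencil centre is forced onto the axis, so its lines meet $\sigma^*$ once and subdivide the interior simplicially, whereas for a secant $\mathfrak{l}$ the centre $L$ is off-axis and the pencil contains infinitely many lines secant to $\sigma^*$, cutting honest chords. I would take two consecutive such chords together with the two tangent lines of $\mathcal{A}$ bounding the enclosed region and show the resulting cell has four walls, contradicting simpliciality; the opposite sub-case, in which almost all pencil lines miss the interior, I would treat below the parabola, where the strips between consecutive pencil lines are crossed transversally by tangent lines and again force a non-triangular cell or, via Lemma \ref{near pencil lemma}, a forbidden weight-two segment. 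Making this dichotomy airtight — excluding every way the finitely many relevant tangent lines of $\mathcal{A}$ near $b$ might accidentally triangulate these cells, using that consecutive points of $\mathcal{A}^*\cap\sigma$ yield consecutive vertices along a fixed line — is where the real effort of (e) lies.
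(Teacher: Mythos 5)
Your parts a), c), d) are correct and are, up to dualization, the paper's own argument: for a) the paper produces the two consecutive weight-two vertices on a line dual to a point of $\mathcal{A}^*\cap\sigma$ (cut by the infinitely many pencil lines), while you produce them on a pencil line $h^*$ cut by the finitely many conic lines, with $h$ chosen off the finitely many secants; both work, and c), d) then follow in both treatments from a), b) together with the fact that $(\partial T)^*$ is the unique accumulation point of $\mathcal{A}^*$. Part b) has two soft spots. First, the statement of b) carries no near-pencil exception, so the near-pencil case must also be checked (easy: all but one line pass through a common point $p$, which forces $p^*=\mathfrak{l}$ and $p\in\partial T$). Second, the inference ``the interior of the parabola meets no tangent line and \emph{hence} must be cut by infinitely many pencil lines'' is itself the claim that needs proof: a priori one single simplicial chamber could contain the unbounded top part of the interior, and ruling this out requires an argument (such a chamber has exactly one vertex on $\partial T$, necessarily the tangency point of the parabola with $\partial T$, so two of its walls must pass through that point, and one must check which lines of $\mathcal{A}$ can do so -- delicate precisely when $\mathfrak{l}$ is tangent to $\sigma$). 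The paper sidesteps this with a purely dual argument: if $|\mathcal{A}^*\cap\mathfrak{l}|<\infty$, fix $q\in\mathcal{A}^*\cap\sigma$; the lines joining $q$ to the infinitely many other points of $\mathcal{A}^*\cap\sigma$ cut out infinitely many vertices on $q^*$, all but finitely many of weight two, so Lemma \ref{near pencil lemma} applies.

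The genuine gap is in e), and you flag it yourself: for $b\notin\mathcal{A}^*$ you offer a plan, not a proof (your case $b\in\mathcal{A}^*$, via the all-weight-two vertices on $b^*$ accumulating at $L$, is complete and correct). The paper is admittedly terse at the same spot -- it reduces by a coordinate computation to ``tangent lines to a parabola plus infinitely many parallel lines not parallel to the axis'' and then asserts non-simpliciality -- but that assertion is exactly what your quadrilateral-cell plan must establish, and, as your own caveats concede, the plan fragments into cases (tangency parameters unbounded on one or both sides, pencil lines meeting or missing the interior, stray tangent lines triangulating the suspect cells). A way to close the case $b\notin\mathcal{A}^*$ with the paper's own lemmas instead of cell geometry: assume $\mathcal{A}$ is not a near pencil; by Lemma \ref{near pencil lemma} no two weight-two vertices are adjacent, by Lemma \ref{4 mal 3  lemma} and affineness no weight-two vertex has four weight-three neighbours, and by Remark \ref{bezoutremark} b) every vertex except $L=\mathfrak{l}^*$ has weight at most three; hence every weight-two vertex is adjacent to $L$. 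Since $b\notin\mathcal{A}^*$, the only tangent line through $L$, namely $b^*$, is absent, so the lines of $\mathcal{A}$ through $L$ are exactly the pencil lines. Consequently a tangent-tangent vertex can never be adjacent to $L$, so it must have weight three, i.e.\ lie on a pencil line; and on each pencil line every tangent-pencil vertex, except at most the two extreme ones, must lie on a second tangent. Normalizing so that the parabola is $y=x^2$ and the pencil lines are horizontal, with $D$ the set of tangency parameters and $C$ the set of heights, these conditions read: $tt'\in C$ for all distinct $t,t'\in D$, and $c/t\in D$ for each $c\in C$ and all but at most two $t\in D$. Now fix $t_1\neq t_2$ in $D$ (so $0\neq t_1t_2\in C$) and let $t\in D$ tend to infinity: the values $t_1t_2/t\in D$ are distinct and accumulate at $0$, so the tangent lines of $\mathcal{A}$ accumulate at the tangent at the parabola's vertex, a line meeting $T$, contradicting local finiteness. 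Until an argument of this kind (or a watertight version of your cell argument) is supplied, e) remains unproven in your write-up.
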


\begin{proof}
a) Define $\mathcal{B}:=\mathcal{A} \cap \sigma^*$ and suppose that $|\mathcal{B}|<\infty$. Since $\mathcal{A}$ is affine and hence necessarily infinite, we set $L:=\mathcal{A} \cap \mathfrak{l}^*$ and conclude that $|L|=\infty$. So we have $\mathcal{A}=\mathcal{B} \cup L$ and it is easy to see that we find a line in $\mathcal{B}$ containing a segment bounded by two vertices of weight two. By Lemma \ref{near pencil lemma} we conclude that $\mathcal{A}$ is a near pencil. \\
b) If $\mathcal{A}$ is a near pencil then both statements are easily seen to be true. So we may assume that $\mathcal{A}$ is not a near pencil. We show that the second statement is a consequence of the first. So suppose that $|\mathcal{A}^* \cap \mathfrak{l}|=\infty$ and assume that $(\partial T)^* \notin \mathfrak{l}$. Dualizing we obtain that the point $\mathfrak{l}^*$ does not lie on the line $\partial T$. Hence $\mathfrak{l}^*$ lies in $T$ and there are infinitely many lines of $\mathcal{A}$ passing through $\mathfrak{l}^*$. But since $\mathcal{A}$ is locally finite in $T$ this is impossible. So it suffices to prove that $|\mathcal{A}^* \cap \mathfrak{l}|=\infty$. We show that $|\mathcal{A}^* \cap \mathfrak{l}|<\infty$ gives a contradiction: fix some $q \in \sigma \cap \mathcal{A}^*$ and consider the pencil $\mathcal{P}_q$ of lines $\mathfrak{l}_{q, q^\prime} \subset (\mathbb{P}^2(\mathbb{R}))^*$ passing through $q$ and $q^\prime  \in  (\sigma \cap \mathcal{A}^*) \setminus \lbrace q \rbrace$. By part a) it follows that $|\mathcal{A}^* \cap \sigma|=\infty$, since by assumption $\mathcal{A}$ is not a near pencil. In particular $|\mathcal{P}_q|=\infty$. Hence there must be a pair of neighbouring lines $\mathfrak{l}_{q, q^\prime}, \mathfrak{l}_{q, q^{\prime\prime}} \in \mathcal{P}_q$ whose intersections with $\mathfrak{l}$ are both not contained in $\mathcal{A}^*$. This is true because by assumption there are only finitely many points in $\mathcal{A}^* \cap \mathfrak{l}$. But this means that the line $q^* \in \mathcal{A}$ must contain a segment bounded by two vertices of weight two, which by Lemma \ref{near pencil lemma} implies that $\mathcal{A}$ is a near pencil. This is the desired contradiction. \\
c) Since $\sigma \cap \mathfrak{l}=\emptyset$ we may use part b) to conclude that $(\partial T)^* \notin \sigma$. But then it follows that $|\mathcal{A}^* \cap \sigma|<\infty$, since points of $\mathcal{A}^*$ may accumulate only in a neighbourhood of $(\partial T)^*$ (because $\mathcal{A}$ is locally finite in $T$). Now by part a) it follows that $\mathcal{A}$ is a near pencil. \\
d) By part b) we already know that $(\partial T)^* \in \mathfrak{l}$. Assume that $(\partial T)^* \notin \sigma$. Then it follows that $|\mathcal{A}^* \cap \sigma|<\infty$, because points of $\mathcal{A}^*$ may accumulate only in a neighbourhood of $(\partial T)^*$. Hence we may use part a) to conclude that $\mathcal{A}$ must be a near pencil.   \\
e) After applying a projectivity as in part a) of Definition \ref{consec def}, we may assume that $\sigma=V(P)$ where $P:=x^2+y^2-z^2$. So $\sigma$ is contained entirely in the affine $z=1$ patch of $(\mathbb{P}^2(\mathbb{R}))^*$. We write $\sigma^\prime$ for the conic in $\mathbb{P}^2(\mathbb{R})$ defined by the same polynomial. \\
Suppose that $\mathcal{A}$ is not a near pencil. As points of $\mathcal{A}^*$ may accumulate only in a neighbourhood of $(\partial T)^*$, we have $(\partial T)^* \in \sigma \cap \mathfrak{l}$. Observe that for $p=(a:b:1) \in \sigma \cap \mathcal{A}^* \subset (\mathbb{P}^2(\mathbb{R}))^*$ the corresponding dual line $p^*$ is the tangent to $\sigma^\prime$ at the point $\left( -a:-b:1 \right) \in \mathbb{P}^2(\mathbb{R})$. In particular, if $(\partial T)^*=\left( x:y:1 \right)$, this implies that there is a sequence of tangent lines to $\sigma^\prime$ converging towards the tangent line at the point $\left( -x:-y:1 \right)$, and this tangent line is precisely $\partial T$. It remains to identify the dual lines $q^*$ corresponding to $q \in \mathfrak{l} \cap \mathcal{A}^*$. We may assume without loss of generality that in the $z=1$ patch of $(\mathbb{P}^2(\mathbb{R}))^*$ the line $\mathfrak{l}$ is given by the equation $y=\lambda$ for some $0 \leq \lambda<1$. Hence any $q \in \mathfrak{l}$ will have homogeneous coordinates $q=(x_0:\lambda:1)$. So if $\lambda>0$, the equation of the	dual line $q^*$ in the $z=1$ patch of $\mathbb{P}^2(\mathbb{R})$ will be $y=-\frac{x_0 \cdot x }{\lambda} - \frac{1}{\lambda}$; if on the other hand $\lambda=0$, then the equation of $q^*$ will be $x=-\frac{1}{x_0}$. Hence if $\lambda>0$, then all lines pass through the point $(0:-\frac{1}{\lambda}:1)$ which implies that $\mathfrak{l}^*=(0:-\frac{1}{\lambda}:1)$; if $\lambda=0$, then all lines pass through $\mathfrak{l}^*=(0:1:0)$. This shows that $\mathfrak{l}^* \notin \sigma^\prime$. Since $(\partial T)^* \in \mathfrak{l}$ we conclude that $\mathfrak{l}^* \in \partial T$. Now we take $\partial T$ as line at infinity. Doing so, we obtain $\mathcal{A}$ as union of tangent lines to a parabola together with infinitely many parallel lines each of which being non-parallel to the symmetry axis of the parabola. But then $\mathcal{A}$ is not simplicial. 
\end{proof}

The following lemma will be the key to proving the main theorem.

\begin{lemma} \label{inzidenz_lemma}
Let $\sigma$ be an irreducible conic together with a projectivity $\Psi$ as in part a) of Definition \ref{consec def}. Assume that $\mathell$ is a line touching $\sigma$. If $\mathcal{A}$ is an irreducible affine rank three Tits arrangement such that $\mathcal{A}^* \subset \sigma \cup \mathell$, then $\mathcal{A}$ is determined by specifying four points on $\sigma$ which are consecutive with respect to $\Psi$. More precisely, if $p_{-1},p_0, p_{1}, p_{2}, p_{3},p_{4} \in \mathcal{A}^* \cap \sigma$ are six consecutive points (with respect to $\Psi$), then we have the following formulas for $p_{-1}$ and $p_{4}$ in terms of $p_0,..., p_3$: \begin{align}
p_{4}&= \left( p_0 \times \left( \mathell^* \times \left( p_{1} \times p_{3} \right)  \right) \right) \times \left( p_{1} \times \left( \mathell^* \times \left( p_{2} \times p_{3} \right)  \right) \right), \\
p_{-1}&= \left( p_{2} \times \left( \mathell^* \times \left( p_{0} \times p_{1} \right)  \right) \right) \times \left( p_{3} \times \left( \mathell^* \times \left( p_{0} \times p_{2} \right)  \right) \right).
\end{align} 
\end{lemma}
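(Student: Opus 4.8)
The plan is to dualize and reduce the statement to an elementary incidence claim about the tangent lines of a conic. Write $g_i:=p_i^\ast$ for the lines of $\mathcal{A}$ dual to the points $p_i\in\mathcal{A}^\ast\cap\sigma$, and set $m:=\mathell^\ast$. Then the hypotheses translate as follows: the $g_i$ are tangent lines to the conic $\sigma^\prime\subset\mathbb{P}^2(\mathbb{R})$ dual to $\sigma$, and $m$ is a point of $\sigma^\prime$, because $\mathell$ touches $\sigma$ and tangency is self-dual; the remaining lines of $\mathcal{A}$ form the pencil through $m$. As in part e) of Proposition \ref{conic line prop}, the tangent to $\sigma^\prime$ at $m$ is $\partial T$, and taking it as the line at infinity presents $\mathcal{A}$ as the tangent lines of a parabola together with a family of lines parallel to its axis (the ``vertical'' lines through $m$). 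I would work in this affine model throughout, noting that $T$ projectivizes to the entire affine chart, so that all finite intersection points are vertices of $\mathcal{A}$ lying in $T$.

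First I would fix the weights. For a finite vertex $v$ the dual line $v^\ast$ meets $\sigma$ in at most two and $\mathell$ in at most one point, so Bézout gives $w(v)=|\mathcal{A}^\ast\cap v^\ast|\le 3$; since $\mathcal{A}$ is affine and hence not spherical, Lemma \ref{4 mal 3  lemma} forces $w(v)=3$, exactly as in the proof of the preceding corollary. Because two vertical lines meet only at $m$, every finite vertex is therefore the concurrence of two tangent lines and exactly one vertical line; equivalently, two tangent lines can meet at a finite vertex only if the vertical line through that point also belongs to $\mathcal{A}$.

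The heart of the argument is the combinatorial claim: \emph{order the tangent lines so that increasing index means moving along $\sigma^\prime$ in a fixed direction; if $g_a$ and $g_b$ with $a<b$ meet on a vertical line $v$ of $\mathcal{A}$, then the next vertex along $v$ in the direction away from the parabola is $g_{a-1}\cap g_{b+1}$.} To see this, note that at $U:=g_a\cap g_b$ the downward ray of $v$ is the common edge of two chambers lying in the wedge below both tangent lines; by thinness each is a triangle. The third side of one of them is a tangent line $g_c$ whose tangent point lies beyond that of $g_b$, and since the $g_i$ are ordered convexly around $\sigma^\prime$, any tangent line with index strictly between $b$ and $c$ would cross the interior of this triangle, contradicting that it is a chamber; hence $c=b+1$. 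Symmetrically the third side of the other chamber is $g_{a-1}$. As both of these tangent lines pass through the next vertex of $v$, that vertex is $g_{a-1}\cap g_{b+1}$. This is the step I expect to be delicate, since it requires carefully identifying the two chambers across $v$, reading off which wall bounds each, and invoking thinness together with the convex ordering to exclude intermediate tangent lines.

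Each required incidence is then an instance of this claim. Applying it to the vertices $g_1\cap g_3$, $g_2\cap g_3$, $g_0\cap g_1$ and $g_0\cap g_2$ shows that the four triples $\{m,\,g_1\cap g_3,\,g_0\cap g_4\}$, $\{m,\,g_2\cap g_3,\,g_1\cap g_4\}$, $\{m,\,g_0\cap g_1,\,g_{-1}\cap g_2\}$ and $\{m,\,g_0\cap g_2,\,g_{-1}\cap g_3\}$ are each collinear. Finally I would translate these into the stated formulas using the conventions of Definition \ref{consec def}: the point $p_i\times p_j$ is $g_i\cap g_j$, the point $\mathell^\ast\times(p_i\times p_j)$ is the dual of the vertical line $\ell_{ij}$ through $m$ and $g_i\cap g_j$, and $p_k\times\bigl(\mathell^\ast\times(p_i\times p_j)\bigr)$ is the point $g_k\cap\ell_{ij}$. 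The first two collinearities say exactly that $A:=g_0\cap\ell_{13}$ and $B:=g_1\cap\ell_{23}$ both lie on $g_4$, so $g_4$ is the line $AB$ and hence $p_4=A\times B$, which is the first displayed formula; the last two collinearities give the formula for $p_{-1}$ in the same way. Since $p_0,\dots,p_3$ thus determine $p_4$ and $p_{-1}$, iterating in both directions shows that $\mathcal{A}$ is determined by any four consecutive points on $\sigma$.
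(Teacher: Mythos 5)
Your overall route is the same as the paper's: reduce everything to the statement that every intersection of two lines dual to points of $\mathcal{A}^*\cap\sigma$ carries a line of the pencil through $m=\mathell^*$, and then read the formulas off the resulting incidences; your affine parabola model, the convexity/chamber argument for the incidence step, and the final translation into cross products are a more detailed version of what the paper merely asserts. However, your weight-fixing paragraph contains a genuine error. The claim that $w(v)=3$ for \emph{every} finite vertex $v$ --- and hence that ``every finite vertex is the concurrence of two tangent lines and exactly one vertical line'' --- is false, and it fails for the very arrangement $\tilde{A}^0_2$ that the lemma describes: each vertical line of that arrangement passing through a point of tangency of one of its tangent lines meets only that one tangent line there, producing a finite vertex of weight two. (One can check this on the explicit dual set of Proposition \ref{conic_line}: the line dual to the tangency point of $p_k^*$ contains exactly the two points $p_k$ and $\left(1:\frac{2k-1}{2}:0\right)$ of $\mathcal{A}^*$.) The reason your argument breaks is that Lemma \ref{4 mal 3  lemma} requires the weight-two vertex to be surrounded by \emph{four} neighbouring vertices of weight three, and for a weight-two vertex lying on a vertical line the neighbour along that line on the side of the parabola may be --- and in $\tilde{A}^0_2$ actually is --- the vertex $m$ itself, which has infinite weight. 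The analogy with Corollary 1 is not valid: there Bézout bounds the weight of \emph{all} vertices by three, whereas here the vertex $\mathell^*$ escapes the bound.

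The gap is repairable, because everything you use downstream only needs the weaker, true statement, which is exactly what the paper proves: a weight-two vertex cannot be the intersection of two tangent lines, i.e.\ every vertex $g_a\cap g_b$ carries a vertical line of $\mathcal{A}$. For such a vertex all four neighbours lie on $g_a$ or $g_b$ and are therefore finite (no tangent line of $\mathcal{A}$ passes through $m$, the only tangent of $\sigma'$ through $m$ being $\partial T\notin\mathcal{A}$), hence of weight at most three by your Bézout bound; weight two for a neighbour is excluded by Lemma \ref{near pencil lemma} together with irreducibility, and then Lemma \ref{4 mal 3  lemma} contradicts affineness. You must then also patch your combinatorial claim, where the false statement is used again tacitly: you need to rule out that the next vertex $U'$ below $U=g_a\cap g_b$ on the vertical line has weight two, in which case its unique non-vertical line would have to be the third side of \emph{both} chambers adjacent to the edge $UU'$. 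This cannot happen, either because a weight-two vertex must be a neighbour of $m$ along its vertical line (the corrected application of Lemma \ref{4 mal 3  lemma}) while $U$ separates $U'$ from $m$, or directly because a single tangent line through $U'$ can close off only the chamber lying on the side of the vertical line that contains its own tangency point, not both. With these two corrections your proof is sound and coincides in substance with the paper's, while supplying the justification of the key incidence that the paper leaves implicit.
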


\begin{proof}
Denote by $L_1, L_2 \subset \mathcal{A}$ the set of lines corresponding to elements in $\mathcal{A}^* \cap \sigma, \mathcal{A}^* \cap \mathell$  respectively. Observe that every $\mathfrak{h} \in L_2$ passes through the point $\mathell^*$ while no line belonging to $L_1$ passes through $\mathell^*$: if $\mathell^* \in \mathfrak{g}$ and $\mathfrak{g}^* \in \sigma$ for some $\mathfrak{g}$, then $\mathfrak{g}^* \in \mathell \cap \sigma=\lbrace (\partial T)^* \rbrace $, by part e) of Proposition \ref{conic line prop}. As $\mathcal{A}$ is thin by definition, we conclude that $\mathfrak{g} \notin \mathcal{A}$. \\
Note also that every vertex of weight two of $\mathcal{A}$ must lie on a line belonging to $L_2$. Indeed, assume there was a vertex $v$ of weight two such that $v = \mathfrak{g} \cap \mathfrak{g}^\prime$ for some $\mathfrak{g}, \mathfrak{g}^\prime \in L_1$. As $\mathcal{A}^* \subset \sigma \cup \mathell$ and because no line belonging to $L_1$ passes through $\mathell^*$, we may use part b) of Remark \ref{bezoutremark} to conclude that every neighbour of $v$ has weight bounded by three. But then by Lemma \ref{near pencil lemma} every neighbour of $v$ has weight precisely three, because $\mathcal{A}$ was assumed to be irreducible. By Lemma \ref{4 mal 3  lemma} we obtain that $\mathcal{A}$ is spherical, a contradiction. In particular, it follows that for every vertex $v^\prime$ obtained as intersection of elements in $L_1$ there is a line $\mathfrak{h} \in L_2$ passing through $v^\prime$. Also, every vertex of weight two is a neighbour of $\mathell^*$.  \\
These conditions already suffice to prove the claim. Let $p_0, p_1, p_2, p_3 \in \mathcal{A}^* \cap \sigma$ be four consecutive points (with respect to $\Psi$). We need to construct the points $p_{-1}, p_4 \in \mathcal{A}^*\cap \sigma$ such that both $p_{-1}, p_0, p_1, p_2$ and $p_1, p_2, p_3, p_4$ are consecutive (with respect to $\Psi$). By symmetry, it suffices to construct $p_4$. For this, denote the line corresponding to $p_i$ by $\mathfrak{g}_i$ and let $\mathfrak{h}$ be the line passing through the vertices $\mathell^*,\mathfrak{g}_1 \cap \mathfrak{g}_3$. Similarly, denote by $\mathfrak{h}^\prime$ the line passing through the vertices $\mathell^*, \mathfrak{g}_2 \cap \mathfrak{g}_3$. Then $\mathfrak{g}_4$ is the line passing through the vertices $\mathfrak{g}_0 \cap \mathfrak{h}, \mathfrak{g}_1 \cap \mathfrak{h}^\prime$. From this, one reads off that (1) holds. This completes the proof.  
\end{proof}

\begin{remark}
Let $P$ be a homogeneous cubic polynomial having an irreducible quadratic factor. If $\mathcal{A}$ is an irreducible spherical Tits arrangement such that $\mathcal{A}^* \subset V(P)$, then one may use part b) of Remark \ref{bezoutremark} to conclude that there are two possibilities for $\mathcal{A}$: either $\mathcal{A}$ is the arrangement $A(7,1)$ or $\mathcal{A}$ belongs to the infinite family $\mathcal{R}(1)$.   
\end{remark}

Now we can construct the arrangement of type $\tilde{A}^0_2$ and prove that up to projectivity it is the only irreducible affine rank three Tits arrangement whose dual point set is contained in the locus of a cubic polynomial having an irreducible quadratic factor: 

\begin{proposition} \label{conic_line}
Up to projectivity, there is only one irreducible affine rank three Tits arrangement $\mathcal{A}$ such that $\mathcal{A}^*$ is contained in the locus of a cubic polynomial $P$ having an irreducible quadratic factor. The arrangement $\mathcal{A}$ may be defined by the following set of dual points: \begin{align*}
\mathcal{A}^* = \left\lbrace \left( k:\frac{k(k-1)}{2}:1 \right), \left(1:\frac{k}{2}:0 \right) \mid k \in \mathbb{Z} \right\rbrace.
\end{align*}   
\end{proposition}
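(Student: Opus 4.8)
The plan is to reduce to the situation of Lemma~\ref{inzidenz_lemma}, to show that the recurrence supplied there forces the points of $\mathcal{A}^*\cap\sigma$ to form an arithmetic progression in a suitable parameter, and then to read off the coordinates after a normalising projectivity. Write $P=Q\cdot\lambda$ with $Q$ irreducible quadratic and $\lambda$ linear, and set $\sigma:=V(Q)$, $\mathfrak{l}:=V(\lambda)$, so that $\mathcal{A}^*\subset\sigma\cup\mathfrak{l}$. Being irreducible, $\mathcal{A}$ is by definition not a near pencil, so Proposition~\ref{conic line prop}(c) excludes $|\sigma\cap\mathfrak{l}|=0$ and part~(e) excludes $|\sigma\cap\mathfrak{l}|=2$; hence $\mathfrak{l}$ touches $\sigma$, and part~(d) gives $\sigma\cap\mathfrak{l}=(\partial T)^*$. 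This places us exactly in the hypotheses of Lemma~\ref{inzidenz_lemma}. Moreover part~(a) gives $|\mathcal{A}^*\cap\sigma|=\infty$, and since $\mathcal{A}$ is locally finite in $T$ these points accumulate only at $(\partial T)^*$; deleting this single point from the (topological) circle $\sigma$ leaves a bi-infinite discrete sequence, so the points of $\mathcal{A}^*\cap\sigma$ are consecutive points $(p_k)_{k\in\mathbb{Z}}$ with respect to $\Psi$.

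Next I would fix a projectivity putting $(\sigma,\mathfrak{l})$ in the normal form of the claimed answer: $\sigma$ is parametrised by $\gamma(t)=(t:\tfrac{t(t-1)}{2}:1)$ with $(\partial T)^*=(0:1:0)$ at $t=\infty$, and $\mathfrak{l}=V(z)$ with pole $\mathfrak{l}^*=(0:0:1)$. The one computation I need is that the chord through $\gamma(a)$ and $\gamma(b)$ meets $\mathfrak{l}$ in $(1:\tfrac{a+b-1}{2}:0)$, so the chord--$\mathfrak{l}$ intersection is affine-linear in the pair $(a,b)$; equivalently, the line joining $\gamma(t_0)$ to $\mathfrak{l}\cap\overline{\gamma(t_1)\gamma(t_3)}$ is the secant $\overline{\gamma(t_0)\gamma(t_1+t_3-t_0)}$.

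The crux is to feed this into the recurrence of Lemma~\ref{inzidenz_lemma}. Dualising the construction of $p_4$ given there (and writing $\mathfrak{g}_i:=p_i^*$ as in its proof), the auxiliary lines $\mathfrak{h},\mathfrak{h}'$ through $\mathfrak{l}^*$ become the points where the chords $\overline{\gamma(t_1)\gamma(t_3)}$ and $\overline{\gamma(t_2)\gamma(t_3)}$ meet $\mathfrak{l}$, so $\mathfrak{g}_0\cap\mathfrak{h}$ and $\mathfrak{g}_1\cap\mathfrak{h}'$ dualise to the two secants $\overline{\gamma(t_0)\gamma(t_1+t_3-t_0)}$ and $\overline{\gamma(t_1)\gamma(t_2+t_3-t_1)}$. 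Since $p_4$ is their intersection, it lies on $\sigma$ if and only if the two secants share an endpoint on $\sigma$; for a genuinely new point this forces $t_1+t_3-t_0=t_2+t_3-t_1$, i.e. $t_1-t_0=t_2-t_1$, and then $t_4=t_1+t_3-t_0$. As $\mathcal{A}$ is an actual Tits arrangement, $p_4\in\mathcal{A}^*\cap\sigma$ always, so this equal-difference condition holds for every window of four consecutive points; running over all $k\in\mathbb{Z}$ shows that $(t_k)$ is an arithmetic progression $t_k=\alpha+k\delta$. I expect this translation to be the main obstacle: one must carefully match the synthetic construction of $p_4$ to the secant picture (or, equivalently, substitute $\gamma$ into the explicit formula for $p_4$ and check that $p_4\in\sigma$ reduces to the identity $t_1-t_0=t_2-t_1$), and one must invoke the incidence facts from the proof of Lemma~\ref{inzidenz_lemma} that $\mathfrak{h},\mathfrak{h}'$ indeed lie in the pencil through $\mathfrak{l}^*$.

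Finally I would use the residual freedom. The projectivities fixing $(\sigma,\mathfrak{l},(\partial T)^*)$ act on the parameter as the two-dimensional affine group $t\mapsto at+c$, which is simply transitive on ordered pairs of distinct parameters; the unique such map with $(t_0,t_1)\mapsto(0,1)$ normalises the progression to $t_k=k$, whence $\mathcal{A}^*\cap\sigma=\{(k:\tfrac{k(k-1)}{2}:1)\mid k\in\mathbb{Z}\}$. The set $\mathcal{A}^*\cap\mathfrak{l}$ is then determined by the incidence structure of Lemma~\ref{inzidenz_lemma}: every vertex $\mathfrak{g}_i\cap\mathfrak{g}_j$ must be covered by a line of the pencil through $\mathfrak{l}^*$, so by the chord rule $\mathcal{A}^*$ must contain the points $(1:\tfrac{m}{2}:0)$, $m\in\mathbb{Z}$, while any further point of $\mathfrak{l}$ would create a segment bounded by two weight-two vertices and hence, by Lemma~\ref{near pencil lemma}, a near pencil. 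This yields the asserted set. Existence and genuineness of the arrangement (local finiteness, since both families accumulate only at $(\partial T)^*$, and triangular chambers at the weight-three interior vertices) are confirmed directly, and the resulting arrangement is the one drawn in Figure~\ref{fig_A2tilde}.
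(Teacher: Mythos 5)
Your reduction (via Proposition \ref{conic line prop}) to the tangency configuration and your appeal to Lemma \ref{inzidenz_lemma} coincide with the paper's, but from there your route is genuinely different. The paper normalizes five points --- $(\partial T)^*$, $p_2$, $p_3$, $p_4$ fixed, $p_1=(x:y:z)$ unknown --- and then eliminates all but one configuration through a four-case analysis (using $p_0,p_5\in\sigma$, $p_0,p_5\notin\{p_1,\dots,p_4\}$, and the tangency of $\mathfrak{l}$ expressed through partial derivatives). You instead normalize the pair $(\sigma,\mathfrak{l})$ once and for all, parametrize $\sigma$ by $\gamma(t)=(t:\tfrac{t(t-1)}{2}:1)$, and use the chord rule that $\overline{\gamma(a)\gamma(b)}$ meets $\mathfrak{l}=V(z)$ in $(1:\tfrac{a+b-1}{2}:0)$; this computation is correct, and so is the resulting secant reformulation of the construction in the proof of Lemma \ref{inzidenz_lemma}: the formula for $p_4$ becomes the intersection of $\overline{\gamma(t_0)\gamma(t_1+t_3-t_0)}$ and $\overline{\gamma(t_1)\gamma(t_2+t_3-t_1)}$, and membership of $p_4$ in $\sigma$ forces $t_1-t_0=t_2-t_1$, the alternative endpoint identifications being excluded by the ordering $t_0<t_1<t_2<t_3$. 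Sliding the window yields an arithmetic progression, the residual stabilizer $t\mapsto at+c$ normalizes it to $t_k=k$, and your determination of $\mathcal{A}^*\cap\mathfrak{l}$ together with the exclusion of extra pencil lines by Lemma \ref{near pencil lemma} parallels the paper. This replaces the paper's case distinctions by one uniform argument, which is arguably cleaner; like the paper, it must invoke the incidence facts from the \emph{proof} of Lemma \ref{inzidenz_lemma}, not just its statement.

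There is, however, one genuine gap: your claim that deleting $(\partial T)^*$ from the circle $\sigma$ leaves a \emph{bi-infinite} discrete sequence. Topology does not give this: an infinite discrete subset of the arc $\sigma\setminus\{(\partial T)^*\}$ may accumulate at $(\partial T)^*$ from one side only, i.e.\ be order-isomorphic to $\mathbb{N}$ rather than $\mathbb{Z}$, and such a configuration is compatible with local finiteness, since a one-sided progression $t_k=k$, $k\geq 1$, still accumulates only at the single projective point $t=\infty$. Your sliding-window argument would then only yield $\mathcal{A}^*\cap\sigma=\{\gamma(k)\mid k\geq 1\}$, not the asserted set, so bi-infiniteness does real work and needs a proof. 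It is true, but the reason is thinness, not topology: if $\mathfrak{g}_1,\mathfrak{g}_2$ are the lines dual to the two ``first'' points, then in the affine chart with $\partial T$ at infinity these are the two outermost tangent lines to the dual parabola, and the wedge between them on the far side of $\mathfrak{g}_1\cap\mathfrak{g}_2$ from the remaining tangent lines is a chamber: no other tangent line enters it, and a pencil line through $\mathfrak{l}^*$ entering it would contain a segment bounded by two vertices of weight two, contradicting Lemma \ref{near pencil lemma} and irreducibility. The closure of this chamber meets $\partial T$ in a two-dimensional cone, so $\partial T$ is one of its walls; but $\partial T\notin\mathcal{A}$, since every member of $\mathcal{A}$ must meet $T$. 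This contradicts thinness and rules out the one-sided case. The paper is also terse at this point (Case 1 passes to $k\in\mathbb{Z}$ ``by induction''), but it does not rest on the incorrect topological assertion; in your write-up this step, as stated, is false and must be replaced by an argument of the above kind.
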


\begin{proof}
Let $\mathell \subset (\mathbb{P}^2(\mathbb{R}))^*$ be the line corresponding to the linear factor of $P$ and let $\sigma \subset (\mathbb{P}^2(\mathbb{R}))^*$ be the irreducible conic corresponding to the quadratic factor of $P$. We then have $\mathcal{A}^* \subset \sigma \cup \mathell \subset (\mathbb{P}^2(\mathbb{R}))^*$ and by Proposition \ref{conic line prop} we may assume that $\mathell$ touches $\sigma$ at the point $(\partial T)^*$. \\
Let $p_1, p_2, p_3, p_4 \in \mathcal{A}^* \cap  \sigma$ be four consecutive points (with respect to some projectivity $\Psi$). 
After a change of coordinates we may assume that \begin{align*}
(\partial T)^*&=\left(0:1:0\right), p_2=\left(1:0:1\right),   \\
p_3&=\left(2:1:1\right), p_4=\left(3:3:1\right). 
\end{align*}
We then have $p_1=\left(x:y:z \right)$ for some $x,y,z \in \mathbb{R}$. Now consider the vertices $v:= p_2 \times p_3, v^\prime:= p_1 \times p_4 \in \mathbb{P}^2(\mathbb{R})$ and let $\mathfrak{g} \subset \mathbb{P}^2(\mathbb{R})$ be the line passing through $v$ and $v^\prime$. Then by (the proof of) Lemma \ref{inzidenz_lemma} we know that $\mathfrak{g} \in \mathcal{A}$ and that $\mathfrak{g}$ passes through the vertex $\mathell^* $. As $\mathell^* \in \partial T$, we may write $\mathell^*=(a:0:b)$ for certain $a, b \in \mathbb{R}$. In order to prove the statement we will distinguish four cases. \\
 
Case 1. Assume that $x=y=0$. This implies that $p_1=\left( 0:0:1 \right)$. We claim that $\mathell^*=\left(0:0:1 \right)$. To see this write $\mathell^*=\left(a:0:b \right)$ for some $a,b \in \mathbb{R}$ as above. The fact that $\mathfrak{g}$ passes through $\mathell^*$ implies that $a=0$ and therefore we have $\mathell^*=\left( 0:0:1 \right)$. \\
Now consider the projectivity $\Phi: (\mathbb{P}^2(\mathbb{R}))^* \longrightarrow (\mathbb{P}^2(\mathbb{R}))^*$ taking the point $p_i$ to $p_{i+1}$ for $1 \leq i \leq 4$. 
We obtain $\mathcal{A}^* \cap \sigma=\left\lbrace \Phi^k(p_1) \mid k \in \mathbb{Z} \right\rbrace= \left\lbrace \left( k:\frac{k(k-1)}{2}:1 \right) \mid k \in \mathbb{Z} \right\rbrace$, using Lemma \ref{inzidenz_lemma} and induction. Observe that the lines of $\mathcal{A}$ corresponding to points in $\mathcal{A}^* \cap \mathell	$ are exactly the lines passing through $\mathell^*$ and a vertex of the form $p \times p^\prime$ for $p, p^\prime \in \mathcal{A}^* \cap \sigma$ (see the proof of Lemma \ref{inzidenz_lemma}). We conclude that $\mathcal{A}^* \cap \mathell = \left\lbrace \left(1:\frac{k}{2}:0 \right) \mid k \in \mathbb{Z} \right\rbrace$. It is now easy to check that $\mathcal{A}^*=\left\lbrace \left( k:\frac{k(k-1)}{2}:1 \right), \left(1:\frac{k}{2}:0 \right) \mid k \in \mathbb{Z} \right\rbrace$ defines an irreducible affine Tits arrangement. \\

Case 2. Assume that $x \neq 0 $ and $y = 0$. Then we may assume that $p_1=\left( 1:0:z \right)$. Write $\mathell^*=\left(a:0:b \right)$ for $a,b \in \mathbb{R}$. The fact that $\mathfrak{g}$ passes through $\mathell^*$ implies that $a \neq 0$. Thus, we may assume that $\mathell^*=\left( 1:0:b \right)$. It follows that $z=\frac{b+4}{3}$ and therefore $p_1=\left(1:0:\frac{b+4}{3} \right)$. Observe that the five given points $(\partial T)^*, p_1,p_2, p_3, p_4$ on $\sigma$ determine its equation. Using this together with Lemma \ref{inzidenz_lemma}, the condition $ p_5 \in \sigma $ implies that $b \in \lbrace -1,- \frac{3}{2}, - \frac{7}{3},-3 \rbrace$. As $p_0,p_5 \neq p_i$ for $1 \leq i \leq 4$, we conclude that $b \in \lbrace -1,- \frac{3}{2}, -3 \rbrace$ is impossible. In the remaining case $b=-\frac{7}{3}$, we observe that the conic $\sigma$ may be defined by the polynomial $f=-\frac{10}{3} X^2 + 2 XY + \frac{28}{3} XZ - \frac{10}{3} YZ - 6 Z^2$. By assumption, we know that the line $\mathell$ touches $\sigma$ at the point $(\partial T)^*$. Thus, as $\mathell^*=\left(1:0:-\frac{7}{3} \right)$, there exists $0 \neq \lambda \in \mathbb{R}$ such that the following equations are satisfied: \begin{align*}
 1&=\lambda \ \frac{\partial f}{\partial_X}\big\rvert_{(\partial T^*)}, \\
 0&=\lambda \ \frac{\partial f}{\partial_Y}\big\rvert_{(\partial T^*)}, \\
 -\frac{7}{3}&=\lambda \ \frac{\partial f}{\partial_Z}\big\rvert_{(\partial T^*)}. 
\end{align*}
The first equation gives $\lambda=\frac{1}{2}$. But then the third equation reads $-\frac{7}{3}=-\frac{5}{3}$.  
This contradiction shows that Case 2 cannot occur. \\
  
Case 3. Assume that $x=0$ and $y \neq 0$. Then without loss of generality, we may assume that $p_1=\left( 0:1:z \right)$. Again, we write $\mathell^*=\left(a:0:b \right)$ for suitable $a,b \in \mathbb{R}$ and as $\mathfrak{g}$ passes through $\mathell^*$, we obtain $a \neq 0$. Thus, we may assume that $\mathell^*=(1:0:b)$, leading to $z= - \frac{b + 3}{3}$. We conclude that $p_1= (0:1:- \frac{b + 3}{3})$. The relation $p_5 \in \sigma$ gives $b \in \lbrace -3, -1 \rbrace$. As $p_5 \neq p_i$ for $1 \leq i \leq 4$, we conclude that this is impossible. \\

Case 4. Assume that both $x \neq 0$ and $y \neq 0$. Then we may suppose that $p_1=(1:y:z)$. Write $\mathell^*=(a:0:b)$ for suitable $a, b \in \mathbb{R}$. As before, by considering the line $\mathfrak{g}$, we conclude that $-3 z a-3 a y-b y+4 a+b=0$. Suppose that $a=0$. Then without loss of generality $b=1$ and we have $y=1$, in particular $p_1=(1:1:z)$. As $p_5 \in \sigma$, we conclude that $z \in \lbrace \frac{1}{3}, \frac{1}{2} \rbrace$. Again, this is not possible because $p_0,p_5 \neq p_i$ for $1 \leq i \leq 4$. \\
 Hence, we may assume that $a=1$. In particular, we have $z = \frac{4}{3} - \frac{b (y-1)}{3} - y$ and $p_1=(1:y:\frac{4}{3} - \frac{b (y-1)}{3} - y)$.\\
 Suppose that $b \neq -3$. Using the condition $p_5 \in \sigma$, we compute that $y \in \left\lbrace 1, \frac{-3 b^2 - 10 b - 7}{2 (b + 3)},
 \frac{2 b^2 + 5 b + 3}{2 (b^2 + 3 b + 3)} \right\rbrace$. As $p_1 \neq p_4$, we can exclude the case $y=1$. \\
 Assume that $y=\frac{2 b^2 + 5 b + 3}{2 (b^2 + 3 b + 3)}$. Then we obtain $p_1=p_5$, a contradiction. So we necessarily have $y= \frac{-3 b^2 - 10 b - 7}{2 (b + 3)}$. In particular, this implies that $p_1=\left(1:\frac{-3 b^2 - 10 b - 7}{2 (b + 3)}:\frac{b^2+4b+5}{2} \right)$. Therefore, the conic $\sigma$ may be defined by the polynomial $f:=(b-1) X^2+2 XY-(b-7) XZ-2(b+4) YZ-6 Z^2$. To see this, one only has to check that $f(p_i)=0$ for $1\leq i \leq 5$. The line $\mathell$ touches $\sigma$ at the point $(\partial T)^*=\left(0:1:0 \right)$. Therefore, as $\mathell^*=\left(1:0:b \right)$, we know that there exists $0 \neq \lambda \in \mathbb{R}$ such that the following equations hold: \begin{align*} 
 1&=\lambda \ \frac{\partial f}{\partial_X}\big\rvert_{(\partial T^*)} , \\
 0&=\lambda \ \frac{\partial f}{\partial_Y}\big\rvert_{(\partial T^*)} , \\
 b&=\lambda \ \frac{\partial f}{\partial_Z}\big\rvert_{(\partial T^*)}. 
\end{align*} 
The first equation gives $\lambda=\frac{1}{2}$. Thus, the third equation yields $b=-2$ and we obtain $p_1=\left(1:\frac{1}{2}:\frac{1}{2} \right)=\left(2:1:1\right)=p_3$, a contradiction. \\
It remains to consider the case $b=-3$. Then we have $\mathell^*=\left(1:0-3 \right)$ and $p_1=\left(1:y:\frac{1}{3} \right)$. Clearly, we have $y \neq 1$ because $p_1 \neq p_4$. Then Lemma \ref{inzidenz_lemma} yields $p_5=\left(3:3:1\right)=p_4$, another contradiction. This completes the proof.   
\end{proof}

We obtain the following Corollary:

\begin{corollary}
There are irreducible affine Tits arrangements which are not locally spherical.
\end{corollary}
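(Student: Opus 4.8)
The plan is to take the arrangement $\tilde{A}^0_2$ produced in Proposition \ref{conic_line} as the witness for the corollary. That proposition already guarantees that the explicit dual point set
\[
\mathcal{A}^* = \left\lbrace \left( k:\frac{k(k-1)}{2}:1 \right), \left(1:\frac{k}{2}:0 \right) \mid k \in \mathbb{Z} \right\rbrace
\]
defines an irreducible affine rank three Tits arrangement, so the only thing left to establish is that it is not locally spherical; by definition this amounts to producing a single vertex that fails to meet the open Tits cone $T$.

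The vertex I would single out is $\mathfrak{l}^*$, the point dual to the line $\mathfrak{l} \subset (\mathbb{P}^2(\mathbb{R}))^*$ on which the points $\left(1:\frac{k}{2}:0\right)$ lie. First I would check that $\mathfrak{l}^*$ really is a vertex of $\mathcal{A}$: the lines dual to these points all pass through $\mathfrak{l}^*$, so infinitely many lines of $\mathcal{A}$ concur there, and (exactly as in the proof of Lemma \ref{inzidenz_lemma}, where $\mathfrak{l}^*$ is already treated as a vertex adjacent to every weight-two vertex) there are chambers of $\mathcal{A}$ having $\mathfrak{l}^*$ as a corner. Concretely one reads off $\mathfrak{l}=\{z=0\}$ and $\mathfrak{l}^*=(0:0:1)$.

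Next I would locate $\mathfrak{l}^*$ on the boundary. By part b) of Proposition \ref{conic line prop} we have $(\partial T)^* \in \mathfrak{l}$; dualizing this incidence (a point lies on a line precisely when the dual line passes through the dual point) gives $\mathfrak{l}^* \in \partial T$. Equivalently, since $(\partial T)^* = (0:1:0)$ one has $\partial T = \{y=0\}$, which contains $\mathfrak{l}^*=(0:0:1)$. Because $T$ is open, $\mathfrak{l}^* \in \partial T$ means $\mathfrak{l}^*$ does not meet $T$, so $\mathcal{A}$ has a vertex outside $T$ and is therefore not locally spherical. As an independent check, $\mathfrak{l}^*$ cannot lie in $T$ at all, since infinitely many lines of $\mathcal{A}$ pass through it while $\mathcal{A}$ is locally finite in $T$; and Lemma \ref{nur ein dicker vertex lemma} confirms that $\mathfrak{l}^*$ is in fact the unique boundary vertex.

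I do not expect a serious obstacle: the geometry has essentially been set up by the previous results, and the computation is a one-line duality. The only subtlety, and the point I would treat most carefully, is the verification that $\mathfrak{l}^*$ genuinely qualifies as a vertex of the thin chamber complex $\mathcal{S}(\mathcal{A},T)$ --- i.e.\ that it is the corner of an actual chamber rather than merely a limit point of $\overline{T}$ --- which is why I would lean on the explicit incidence structure already exploited in Lemmas \ref{near pencil lemma} and \ref{inzidenz_lemma}.
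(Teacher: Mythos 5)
Your proposal is correct and takes essentially the same route as the paper: the paper's proof also exhibits the arrangement of Proposition \ref{conic_line} and observes that the vertex $\mathfrak{l}^*$ is incident with infinitely many lines of $\mathcal{A}$, hence (by local finiteness in $T$) cannot meet the open cone $T$. Your extra details --- placing $\mathfrak{l}^*=(0:0:1)$ on $\partial T=\{y=0\}$ via duality with Proposition \ref{conic line prop}~b), and checking that $\mathfrak{l}^*$ really is a vertex of the chamber complex --- merely make explicit what the paper leaves implicit.
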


\begin{proof}
This follows from Proposition \ref{conic_line}. The arrangement constructed there is such an example: the vertex $\mathell^*$ is incident with infinitely many lines of $\mathcal{A}$.
\end{proof}

Finally, using Proposition 2, Corollary 1, Proposition 3, and Proposition 4, we obtain the promised main theorem:

\begin{theorem}
Let $\mathcal{A}$ be an affine rank three Tits arrangement such that $\mathcal{A}^*$ is contained in the locus of a homogeneous polynomial of degree three. Then up to projectivity $\mathcal{A}$ is either a near pencil, an arrangement of type $\tilde{A_2}$, or it is an arrangement of type $\tilde{A^0_2}$.
\end{theorem}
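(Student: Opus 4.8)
The plan is to prove the theorem by running the case analysis on the real factorization type of the cubic form $P$ that was announced at the start of this section, invoking in each case the preliminary result tailored to it. Since only the locus $V(P)$ (equivalently, the radical of $P$) matters for the condition $\mathcal{A}^*\subset V(P)$, a homogeneous cubic in three real variables gives rise to exactly one of three geometric configurations for the ambient set of $\mathcal{A}^*$: a union of at most three projective lines (when $P$ is a product of three real linear forms, allowing repetitions or a reducible quadratic factor), the union of an irreducible conic $\sigma$ with a line $\mathfrak{l}$ (when $P$ is a real linear form times an irreducible real quadratic form), or an irreducible cubic curve (when $P$ is irreducible over $\mathbb{R}$). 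First I would record this trichotomy and observe that it is exhaustive.

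In the first configuration, $\mathcal{A}^*$ lies in the union of at most three lines, so Proposition 2 applies directly and yields that $\mathcal{A}$ is a near pencil or an arrangement of type $\tilde{A}_2$. In the third configuration, $\mathcal{A}^*\subset V(P)$ with $P$ irreducible, so Corollary 1 shows that no affine rank three Tits arrangement can occur; this case is vacuous. Both of these are immediate applications and require no further work.

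The substantive case is the conic-plus-line configuration $\mathcal{A}^*\subset\sigma\cup\mathfrak{l}$. Here I would first appeal to Proposition \ref{conic line prop} to pin down the mutual position of $\sigma$ and $\mathfrak{l}$: its parts (c) and (e) force $\mathcal{A}$ to be a near pencil when $\sigma\cap\mathfrak{l}$ is empty or consists of two points, while part (d) shows that when $\sigma$ and $\mathfrak{l}$ meet in a single point that point must be $(\partial T)^*$ unless $\mathcal{A}$ is a near pencil; that is, away from near pencils the line $\mathfrak{l}$ is tangent to $\sigma$. In this remaining tangent configuration, either $\mathcal{A}$ is reducible, in which case it is a near pencil by the very definition of near pencils in rank three, or $\mathcal{A}$ is irreducible, in which case Proposition \ref{conic_line} identifies it up to projectivity as the unique such arrangement, namely the type $\tilde{A}^0_2$. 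Assembling the three configurations yields precisely the three families in the statement.

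Because every ingredient has already been established, the argument is a pure assembly and entails no fresh computation. The only step demanding genuine care --- and the one I expect to be the sole obstacle --- is verifying completeness of the trichotomy: one must confirm that the degenerate factorizations (a repeated linear factor, or a quadratic factor that is definite or splits into two real lines) do not create new geometric possibilities but collapse into the "union of at most three lines" case handled by Proposition 2, and that a conic with empty or finite real locus is likewise harmless, since Proposition \ref{conic line prop}(a) then forces $\mathcal{A}$ to be a near pencil. Once this bookkeeping is in place, the four cited results close out the cases verbatim.
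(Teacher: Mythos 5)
Your proposal is correct and follows essentially the same route as the paper: the paper's own proof of this theorem is precisely the one-line assembly of Proposition 2 (three-lines case), Corollary 1 (irreducible cubic case), Proposition \ref{conic line prop} (position of $\sigma$ and $\mathfrak{l}$), and Proposition \ref{conic_line} (the tangent case yielding $\tilde{A}^0_2$), organized by the factorization trichotomy announced at the start of Section 3. Your extra bookkeeping on degenerate factorizations is sound and in fact more explicit than the paper; the only cosmetic point is that for a definite quadratic factor one should argue via containment of $\mathcal{A}^*$ in a single line (Proposition 1 / non-simpliciality) rather than via Proposition \ref{conic line prop}(a), which presupposes a nonempty real conic.
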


\section{Open questions and related problems}
In this section we want to point out some possibly interesting related problems. First, we ask if there exists an affine rank three Tits arrangement $\mathcal{A}$ (viewed as arrangement of lines in the real projective plane) such that $\mathcal{A}^*$ is contained entirely in the locus of an irreducible homogeneous polynomial: 

\begin{theoremc}
Is there some irreducible homogeneous polynomial $P \in \mathbb{R}[x,y,z]$ such that $\mathcal{A}^* \subset V(P)$ for a suitable irreducible affine rank three Tits arrangement $\mathcal{A}$?
\end{theoremc}

Observe that given a Tits arrangement $\mathcal{A}$ and an irreducible homogeneous polynomial $P$ of degree $d$ such that $\mathcal{A}^* \subset V(P)$, it follows immediately that $\mathcal{A}$ is locally spherical. Indeed, suppose there was a vertex $v$ of $\mathcal{A}$ such that infinitely many lines of $\mathcal{A}$ pass through $v$. Then after dualizing it follows that infinitely many points of $\mathcal{A}^*$ lie on the line $v^*$. But since by assumption $\mathcal{A}^* \subset V(P)$, it follows that infinitely many points lie on the intersection $V(P) \cap v^*$. But Bézout's theorem tells that $|V(P) \cap v^*| \leq d \cdot 1 =d < \infty$, because $P$ was assumed to be irreducible and hence $v^*$ cannot be a component of $V(P)$. This contradiction shows that $\mathcal{A}$ must be locally spherical. 
\\

This leads to the next problem. Are there other examples of irreducible affine rank three Tits arrangements which are not locally spherical?

\begin{theoremc}
Classify (up to projectivities) all irreducible affine rank three Tits arrangements $\mathcal{A}$ which are not locally spherical. 
\end{theoremc} 

Observe that if $\mathcal{A}$ is not locally spherical, then by Lemma \ref{nur ein dicker vertex lemma} there is precisely one vertex $v$ on the boundary of the Tits cone $T$. In particular, it follows that for every line $\mathfrak{l} \neq v^*$ we have $|\mathcal{A}^* \cap \mathfrak{l}|< \infty$. If in addition we know that $\mathcal{A}^* \subset V(P)$ for some homogeneous polynomial $P$ of degree $d$, then by Bézout's theorem the last inequality can be strengthened to $$|\mathcal{A}^* \cap \mathfrak{l}| \leq |V(P) \cap \mathfrak{l}| \leq d$$ for every line $\mathfrak{l} \neq v^*$ which is not a component of $V(P)$.

We close this section by proposing the following final problem which is probably the most difficult:
\begin{theoremc}
Classify (up to projectivities) all affine rank three Tits arrangements $\mathcal{A}$ such that $\mathcal{A}^* \subset V(P)$ for some homogeneous polynomial $P \in \mathbb{R}[x,y,z]$.
\end{theoremc}
A solution to the last problem seems to be an important step towards a classification of all affine rank three Tits arrangements. Indeed, if $\mathcal{A}$ is such an arrangement and if $\mathcal{A}= \bigcup_{i \in I} L_i$ for some finite index set $I$ and sets of mutually parallel lines $L_i, i \in I$, then $\mathcal{A}^*$ is contained in the locus of a polynomial $P$ of degree $|I|$: the polynomial $P$ is a product of linear factors corresponding to the sets $L_i, i \in I$.
For example, affine Tits arrangements coming from Nichols algebras of diagonal type are always of this type.

Even if we enlarge $\mathcal{A}$ by finitely many countable subsets of tangent lines to certain conics, we still find a polynomial $P^\prime$ such that the enlarged arrangement is contained in the locus of $P^\prime$. The polynomial $P^\prime$ may be taken as the product of $P$ together with the irreducible quadratic polynomials defining the (dual) conics in question. This gives the impression that the class of rank three affine Tits arrangements lying on the locus of some polynomial is rather large, as demonstrated by the fact that only usage of at most quadratic polynomials already leads to nontrivial considerations. It may even be conjectured that for every irreducible rank three affine Tits arrangement $\mathcal{B}$ there is a certain polynomial $Q$ such that $\mathcal{B}^* \subset V(Q)$. If this is true, then clearly a solution to Problem 3 amounts to a complete classification of affine rank three Tits arrangements.       

\def\cprime{$'$}
\providecommand{\bysame}{\leavevmode\hbox to3em{\hrulefill}\thinspace}
\providecommand{\MR}{\relax\ifhmode\unskip\space\fi MR }
% \MRhref is called by the amsart/book/proc definition of \MR.
\providecommand{\MRhref}[2]{%
  \href{http://www.ams.org/mathscinet-getitem?mr=#1}{#2}
}
\providecommand{\href}[2]{#2}


\begin{thebibliography}{1}

\bibitem{p-C10}
M.~Cuntz, \emph{Crystallographic arrangements: Weyl groupoids and simplicial
  arrangements}, Bull. London Math. Soc. \textbf{43} (2011), no.~4, 734--744.

\bibitem{p-C10b}
\bysame, \emph{Minimal fields of definition for simplicial arrangements in the
  real projective plane}, Innov. Incidence Geom. \textbf{12} (2011), 49--60.

\bibitem{CMW}
M.~Cuntz, B.~M{\"u}hlherr, and C.~J. Weigel, \emph{Simplicial arrangements on
  convex cones}, to appear in Rend. Semin. Mat. Univ. Padova (2016), 39 pp.,
  available at
  \href{http://arxiv.org/abs/1505.08024}{\texttt{arXiv:1505.08024}}.

\bibitem{p-G-09}
B.~Gr{\"u}nbaum, \emph{A catalogue of simplicial arrangements in the real
  projective plane}, Ars Math.~Contemp. \textbf{2} (2009), no.~1, 25 pp.

\bibitem{p-H-06}
I.~Heckenberger, \emph{The {W}eyl groupoid of a {N}ichols algebra of diagonal
  type}, Invent.~Math. \textbf{164} (2006), no.~1, 175--188.

\bibitem{p-HW-10}
I.~Heckenberger and V.~Welker, \emph{Geometric combinatorics of {W}eyl
  groupoids}, J. Algebraic Combin. \textbf{34} (2011), no.~1, 115--139.

\end{thebibliography}
\end{document}